\documentclass[11pt,a4paper]{article}
\usepackage[T1]{fontenc}
\usepackage{lmodern}
\usepackage{amsmath,amssymb,amsthm,mathtools}
\usepackage[margin=26mm]{geometry}
\usepackage{microtype}
\usepackage{enumitem}
\usepackage[hidelinks,pdfusetitle]{hyperref}
\usepackage{fancyhdr}
\setlist[enumerate]{itemsep=3pt,topsep=5pt,leftmargin=*}
\newtheorem{theorem}{Theorem}
\newtheorem{lemma}[theorem]{Lemma}

\theoremstyle{remark}

\newcommand{\PP}{\mathbb P}
\newcommand{\EE}{\mathbb E}
\newcommand{\TT}{\mathbb T_D}
\newcommand{\one}{\mathbf 1}
\newcommand{\BPr}{\mathbb P^{\mathrm{br}}_{N,m}}
\newcommand{\BEx}{\mathbb E^{\mathrm{br}}_{N,m}}
\newcommand{\dd}{\,\mathrm d}
\title{Very sharp distance and range transitions for random walk bridges\\on Ramanujan graphs\\[0.5em]\large\itshape Into the abyss with open eyes}
\author{Itai Benjamini}
\date{}

\begin{document}
\maketitle
\vspace{-1.8em}
\begin{abstract}
For vertex-transitive Ramanujan graphs with logarithmic girth, a simple random walk
bridge of length of order $\log N$, where $N$ is the size of the graph, has a maximum
distance that changes from order $\sqrt{\log N}$ to order $\log N$ in a bounded critical window. We prove this by separating
bridges whose lifts to the regular tree close from those whose lifts do not.
A uniform two-term return estimate determines the probabilities of these
two cases and the real-valued critical center. In the same $O(1)$ window, the normalized
range has a two-point limiting law whose mixture weights vary nontrivially across the window. We also prove a uniform first-return
estimate and supercritical range concentration under the weaker assumption
that the girth tends to infinity.
\end{abstract}

\section{Introduction}

Consider a random walk on a
large, locally tree-like Ramanujan graph and condition it to return to its starting
point after a logarithmic number of steps. There are two fundamentally different
ways for this conditioning to be realized. The walk can behave as it would on the
infinite regular tree: its lift to the universal cover closes, its maximal distance
is only of square-root order in the bridge length, and its range has the tree-bridge
constant. Or the lift can fail to close even though its projection returns to the
starting vertex. Such a path must exploit the global identifications of the finite
graph; logarithmic girth then forces it to travel to logarithmic distance, and its
range has the larger, unconditioned-tree constant.

The striking point is how abruptly the balance between these mechanisms changes.
The probability of a closed lift is governed by the tree return probability, whereas
the competing global return has stationary size of order $N^{-1}$. Since the tree
return at time $m$ is of order
\[
 m^{-3/2}\rho^m,
\]
the two mechanisms balance near a logarithmic time with a
$-\frac32\log\log N$ correction. More importantly, the entire crossover takes
place when $m$ is changed by only $O(1)$ steps. Thus a bridge can change from
square-root height to logarithmic height, and its normalized range can switch
between two distinct constants, inside a bounded lattice window. At a fixed
position in this window neither behavior washes out: the limiting law is a genuine
two-point mixture whose weights vary exponentially with the offset.

For perspective, the $O(1)$ lattice window here is much narrower in absolute
time than the $\sqrt{\log N}$-scale cutoff window for ordinary simple random
walk on Ramanujan graphs proved by Lubetzky and Peres~\cite{LP}, although the
two transitions concern different observables: here we condition on an exact
return and study the geometry and range of the resulting bridge. The range of random-walk
bridges on vertex-transitive graphs, including the regular-tree constant that
appears below, was studied by Benjamini, Izkovsky and Kesten~\cite{BIK}.
For background on Ramanujan and expander graphs, see for example
Hoory, Linial and Wigderson~\cite{HLW}.

\section{Setup and statements}

Let $G_N$ be a finite connected simple vertex-transitive $D$-regular graph
on $N$ vertices, where $D\ge3$ is fixed. Let $P_N$ be its simple random walk
transition matrix and $g_N$ its girth. Write
\[
 \rho=\frac{2\sqrt{D-1}}D,\qquad \gamma=-\log\rho,\qquad
 a_N=\begin{cases}1,&G_N\text{ nonbipartite},\\2,&G_N\text{ bipartite},\end{cases}
 \qquad s_N=\frac{a_N}{N}.
\]
The graph is \emph{Ramanujan} if every eigenvalue other than $1$, and $-1$
in the bipartite case, lies in $[-\rho,\rho]$. All logarithms are natural.
All asymptotic statements are as $N\to\infty$. Uniform statements for
$m\asymp\log N$ are understood on any fixed interval
$b\log N\le m\le B\log N$, where $0<b<B<\infty$.
Put
\[
 u_N(r)=P_N^r(o,o),\quad f_N(r)=\PP_o(\tau_o^+=r),\quad
 t_r=P_{\TT}^r(o,o),\quad f_{\mathbb T}(r)=\PP_o^{\TT}(\tau_o^+=r),
\]
where $\tau_o^+=\inf\{r\ge1:X_r=o\}$. Sums over first-return lengths
run over positive integers. For an even bridge length $m$, let
\[
 \BPr=\PP_o(\,\cdot\mid X_m=o),\qquad
 V_m=|\{X_0,\ldots,X_{m-1}\}|,\qquad W_m=V_m/m.
\]
Our first observable is the maximum graph distance reached by the bridge:
\[
 H_m=\max_{0\le j\le m}d_{G_N}(o,X_j).
\]
The two range constants and the tree return constant are
\begin{equation}\label{eq:constants}
 \alpha=\frac{D-2}{2(D-1)},\qquad
 \beta=\frac{D-2}{D-1},\qquad
 b_D=\frac{2^{3/2}D(D-1)}{\sqrt\pi(D-2)^2}.
\end{equation}

The transition occurs when the tree return probability
$t_m\sim b_Dm^{-3/2}e^{-\gamma m}$ becomes comparable with $s_N=a_N/N$.
The exponential factor gives the first-order time $\gamma^{-1}\log N$.
The prefactor $m^{-3/2}$ moves the balance earlier by
$\frac{3}{2\gamma}\log\log N$. This leads to the real-valued critical center
\begin{equation}\label{eq:center}
 m_N^*=\frac{\log N-\frac32\log\log N+
       \log(b_D\gamma^{3/2}/a_N)}{\gamma}.
\end{equation}
The role of bipartiteness is confined to the trivial spectral contribution at
admissible even times: it changes $s_N$ from $1/N$ to $2/N$.  Accordingly it
changes only the constant-order term in $m_N^*$, through $a_N$, and not the
leading $\gamma^{-1}\log N$, the $-\frac{3}{2\gamma}\log\log N$
correction, or the bounded-window mechanism.
Indeed, using $t_m\sim b_Dm^{-3/2}e^{-\gamma m}$ and $s_N=a_N/N$,
\[
 \frac{t_m}{s_N}
 =(1+o(1))\left(\frac{\log N}{\gamma m}\right)^{3/2}
 e^{-\gamma(m-m_N^*)}
\]
uniformly for $m\asymp\log N$. Thus at $m=m_N^*+O(1)$ the prefactor tends
to one and a fixed change of the admissible bridge length changes the odds by a
fixed multiplicative factor. Since bridge lengths are even, all fixed-offset
limits below are understood along admissible even subsequences.
We write $Z_N=\Theta_{\PP}(b_N)$ if, for every $\varepsilon>0$, there are
$0<c_\varepsilon<C_\varepsilon<\infty$ such that
$\PP(c_\varepsilon b_N\le Z_N\le C_\varepsilon b_N)\ge1-\varepsilon$
for all sufficiently large $N$.

Lift the path to the regular-tree universal cover. The lift may close at its
starting vertex, in which case it is a tree bridge with square-root height.
Alternatively, it may end at a different vertex above the same graph root.
Logarithmic girth forces such a path to reach logarithmic distance. Below and
above the transition, respectively, one of these possibilities has probability
tending to one. Inside the critical window, both occur with positive limiting
probabilities.

For example, when $D=3$ one has $\alpha=1/4$ and $\beta=1/2$.  A bridge
whose lift closes is a bridge on the $3$-regular tree and typically has
height of order $\sqrt m$ and normalized range near $1/4$.  If the projected
bridge closes while its lift does not, logarithmic girth forces the path out
to distance of order $\log N$, and its normalized range is instead near
$1/2$.  The results below quantify how the conditional weights of these two
behaviors exchange dominance in a bounded window.

The square-root assertion concerns typical bridges in the precise
$\Theta_{\PP}$ sense above. Its constants may depend on the desired error
probability; it does not assert a limiting deterministic height or a
corresponding estimate for the mean throughout the whole subcritical window.

\begin{theorem}\label{thm:height}
Assume that $G_N$ is Ramanujan and $g_N\ge c\log N$ for a fixed $c>0$.
Let $m=m_N$ be even and satisfy $m\asymp\log N$.
\begin{enumerate}
\item If $m-m_N^*\to-\infty$, then, under the bridge law,
\[
 H_m=\Theta_{\PP}(\sqrt{\log N}).
\]
\item If $m-m_N^*\to+\infty$, then
\[
 \BPr\left(\left\lfloor\frac{g_N}{2}\right\rfloor
                  \le H_m\le\frac m2\right)\longrightarrow1.
\]
In particular, $H_m$ is between fixed positive multiples of $\log N$
with probability tending to one.
\item For every sequence $r_N$ with
$\sqrt{\log N}\ll r_N\ll\log N$,
\begin{equation}\label{eq:heightweight}
 \BPr(H_m\le r_N)=\frac{t_m}{s_N+t_m}+o(1).
\end{equation}
Consequently, if $m-m_N^*\to x\in\mathbb R$, then
\[
 \BPr(H_m\le r_N)\longrightarrow\frac{1}{1+e^{\gamma x}}.
\]
\end{enumerate}
\end{theorem}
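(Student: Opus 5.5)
The plan is to decompose the bridge event according to whether the lift of the path to the universal cover $\TT$ closes. Let $\pi:\TT\to G_N$ be the covering map with $\pi(\tilde o)=o$, and let $\tilde X$ be the lifted walk. Then $\{X_m=o\}=\{\tilde X_m=\tilde o\}\sqcup\{\tilde X_m\in\pi^{-1}(o)\setminus\{\tilde o\}\}$. The first event has probability exactly $t_m$. Hence
\[
u_N(m)=t_m+R_N(m),\qquad R_N(m):=\PP_o\bigl(X_m=o,\ \tilde X_m\ne\tilde o\bigr).
\]
The key input is the two-term return estimate alluded to in the abstract, $u_N(m)=s_N+t_m+o(s_N+t_m)$, uniformly for even $m\asymp\log N$. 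Combined with the identity above, it gives $R_N(m)=s_N(1+o(1))+o(t_m)$.

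If that estimate is not available as an earlier result, I would prove it spectrally. The Ramanujan property bounds the nontrivial part of $u_N(m)$ via the Kesten–McKay-type comparison. Write $u_N(m)-s_N=\frac1N\sum_{\lambda\ne\pm1}\lambda^m$, and use vertex-transitivity to express $u_N(m)$ as the tree count plus the nontrivial-cycle count. The nontrivial-cycle part, $R_N(m)$, is a sum over the nonidentity elements of $\pi^{-1}(o)$, each at tree distance at least $g_N$. Tree transition probabilities to distance $k\ge g_N$ decay like $\rho^m e^{-ck}$-type factors, which is what makes the corrections relatively $o(1)$. This estimate is the main obstacle. The hard part is making the error uniformly negligible compared with $\max(s_N,t_m)$ across the whole window, not just compared with each term separately.

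Given the decomposition, part 3 and the odds statements follow quickly. The ratio $\BPr(\text{lift closes})=t_m/u_N(m)=t_m/(s_N+t_m)+o(1)$ follows directly. By the displayed asymptotic for $t_m/s_N$, this ratio tends to $1/(1+e^{\gamma x})$ when $m-m_N^*\to x$, tends to $1$ in the subcritical regime, and tends to $0$ in the supercritical regime.

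Next, on the closed-lift event the law of the path is exactly the tree bridge law. Graph distance is at most tree distance, so $H_m\le\tilde H_m$. Moreover, $d_{G_N}=d_{\TT}$ whenever the tree distance is below $g_N/2$. Since tree-bridge heights are $\Theta_\PP(\sqrt m)$, we get $H_m=\tilde H_m=\Theta_\PP(\sqrt{\log N})$ on this event. For the tree-bridge height bound, I would use a standard reduction: the distance process of the tree walk is a reflected biased walk, and conditioned on returning at time $m$ it behaves like a Brownian excursion. Both tails then follow, for instance from local limit estimates for the excursion count. This gives part 1, and it also gives $\BPr(H_m\le r_N,\ \text{lift closes})=\BPr(\text{lift closes})-o(1)$ for $r_N\gg\sqrt{\log N}$.

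On the non-closed event, $\tilde X_m$ is a nonidentity preimage of $o$ and so lies at tree distance at least $g_N$. The lifted path must therefore pass through tree distance $\lfloor g_N/2\rfloor$ from both endpoints at the midpoint vertex of the geodesic. At that time the graph distance from $o$ is at least $\lfloor g_N/2\rfloor$, because a shorter graph path would close a non-contractible cycle of length $<g_N$. Trivially $H_m\le m/2$, since the walk must return. Hence on this event $H_m\ge\lfloor g_N/2\rfloor\ge (c/2)\log N-1\gg r_N$. Adding the two events gives \eqref{eq:heightweight} and part 2.
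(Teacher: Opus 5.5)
Your decomposition is the paper's: split $\{X_m=o\}$ by whether the lift closes, use $\BPr(\mathcal C_m)=t_m/u_N(m)$, the exact tree-bridge law on $\mathcal C_m$, and girth geometry on $\mathcal C_m^c$. Like the paper, you import the two-term return estimate; your weaker form $u_N(m)=(1+o(1))(s_N+t_m)$ suffices, since $\theta_N\le1$.

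One step, however, fails as written: the lower bound on the non-closed event via the \emph{midpoint} of the geodesic from $\tilde o$ to $\tilde X_m$. That midpoint need not be far from $o$ in $G_N$. If the geodesic has length $2g_N$ and projects to a shortest cycle through $o$ traversed twice, its midpoint projects to $o$ itself. Your justification (``a shorter graph path would close a non-contractible cycle of length $<g_N$'') only works when the tree segment you concatenate with has length at most $\lfloor g_N/2\rfloor$. So take instead the vertex $\tilde z$ on the geodesic at tree distance exactly $k=\lfloor g_N/2\rfloor$ from $\tilde o$; the lifted path must visit it. Suppose a graph path of length $j<k$ joined $o$ to $\pi(\tilde z)$. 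Its lift from $\tilde o$ cannot end at $\tilde z$, because $d_{\TT}(\tilde o,\tilde z)=k>j$. Combining that path with the projected tree segment therefore gives a closed walk of length at most $2k-1<g_N$ whose lift does not close, which is impossible. Equivalently, argue as the paper does: $B_{G_N}(o,\lfloor(g_N-2)/2\rfloor)$ is a tree, so a bridge confined to it has a closed lift.

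Citing the return estimate is legitimate, since the paper proves it in Section 3 before Theorem~\ref{thm:height}. Your fallback sketch for it would not work, though. The Ramanujan bound alone gives $|u_N(m)-s_N|\le\rho^m$, which exceeds $t_m$ by a factor $\asymp m^{3/2}$. That is the very factor that produces the $-\frac{3}{2\gamma}\log\log N$ shift, so inside the window this error swamps both terms. Nor is $R_N(m)$ a small correction controlled by per-vertex tree decay. It sums over exponentially many points of $\pi^{-1}(o)$ and equals $s_N+o(t_m)$, so identifying it \emph{is} the estimate and requires spectral input. The paper uses two facts instead: $\mu_N$ and $\mu_{\mathbb T}$ share all moments of degree $<g_N$, and a Chebyshev truncation of $\lambda^m$ at degree $K=\lfloor(\log N)^{2/3}\rfloor$ has error $\rho^m\bigl(4e^{-K^2/(2m)}+2e^{\kappa K}/N\bigr)=o(t_m)$ uniformly for $m\le B\log N$.

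Finally, your Brownian-excursion sketch for the tree-bridge height differs from the paper's route. The paper tilts the uniform Dyck law by $v^K$ with $v=D/(D-1)<2$, then applies H\"older against explicit uniform-Dyck tails. Your route suffices here since $m\to\infty$, but the input it needs is exactly that this tilt keeps the excursion count $K$ tight.
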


In particular, when $m/\log N\to C\in(0,\infty)$, the first two cases
apply for $C<\gamma^{-1}$ and $C>\gamma^{-1}$, respectively.

The tree term is exactly the unconditioned probability that the lift closes.
For bounded critical offsets, the remaining paths contribute asymptotically
$s_N$. The same two weights appear in the normalized range: its possible
limiting values are the fixed constants $\alpha$ and $\beta$, and the
weights assigned to them change with the bridge length.

\begin{theorem}\label{thm:main}
Assume that $G_N$ is Ramanujan and $g_N\ge c\log N$ for a fixed $c>0$.
Fix $0<B<\infty$.
\begin{enumerate}
\item Uniformly over even integers $2\le m\le B\log N$,
\begin{equation}\label{eq:twoterm}
 u_N(m)=s_N+(1+o(1))t_m.
\end{equation}
(The exceptional value $m=0$ satisfies $u_N(0)=t_0=1$ exactly.)
\item If $m=m_N\to\infty$ is even and $m\le B\log N$, set
\[
 \theta_N=\frac{t_m}{s_N+t_m}.
\]
For each fixed integer $k\ge1$,
\begin{equation}\label{eq:moments}
 \BEx[W_m^k]=\theta_N\alpha^k+(1-\theta_N)\beta^k+o(1).
\end{equation}
Consequently, if $\theta_N\to\theta$, then
\begin{equation}\label{eq:mixture}
 W_m\ \Rightarrow\ \theta\delta_\alpha+(1-\theta)\delta_\beta.
\end{equation}
\end{enumerate}
\end{theorem}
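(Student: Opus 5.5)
For part (1), I would split $u_N(m)$ according to where the lift to $\TT$ ends. Fix a lift $\tilde o$ of $o$. The walk on $G_N$ is the projection of the walk on $\TT$, so
\[
u_N(m)=\sum_{v\in\pi^{-1}(o)}P_{\TT}^m(\tilde o,v)=t_m+R_N(m),
\]
where $R_N(m)$ collects the lifts $v\ne\tilde o$. By logarithmic girth, every such $v$ is at distance at least $g_N$ from $\tilde o$.

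The main step is to show $R_N(m)=s_N+o(t_m)$ uniformly for $2\le m\le B\log N$. From the spectral side, vertex-transitivity gives
\[
u_N(m)=\frac1N\operatorname{tr}P_N^m=s_N+\frac1N\sum_{|\lambda_i|\le\rho}\lambda_i^m ,
\]
so $u_N(m)-s_N$ is an average of $\lambda^m$ over the nontrivial spectral measure. I would compare this with $t_m=\int x^m\,d\mu_{KM}$, the Kesten--McKay law. The tool is the Chebyshev-type (nonbacktracking) polynomial expansion: with $\sigma_N$ the empirical nontrivial spectral measure, the quantity $\int q_k\,d\sigma_N$ counts nonbacktracking closed walks of length $k$. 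These vanish for $k<g_N$ and are bounded by $(D-1)^{k}$-type counts divided by $N$ otherwise. Writing $x^m=\sum_k c_{m,k}q_k(x)$, the terms with $k<g_N$ reproduce $t_m$ exactly, up to the trivial-eigenvalue correction. The terms with $k\ge g_N$ carry coefficients $c_{m,k}\lesssim \rho^m e^{-ck^2/m}$-type Gaussian weights, since $k$ is the tree distance. They are multiplied by nonbacktracking counts at most of order $(D-1)^{k/2}\cdot\mathrm{poly}$ after normalisation, where the Ramanujan property is what gives the $\sqrt{D-1}$ growth through $|\lambda|\le\rho$. Because $k\ge g_N\ge c\log N$ and $m\le B\log N$, the factor $e^{-ck^2/m}$ is $N^{-c'}$. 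I expect this to make the remainder $o(t_m)$ plus the exact $s_N$ trivial term, with bipartite handling giving $a_N$.

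The hardest step is obtaining these uniform bounds on the coefficients $c_{m,k}$, together with the nonbacktracking trace estimates. I would first try the standard identity $P^m=\sum_k c_{m,k}A_k/(D(D-1)^{k-1})$ with $A_k$ the nonbacktracking matrices. I would bound $(A_k)_{oo}$ by $\|A_k\|_{\perp}\lesssim k(D-1)^{k/2}$ on the nontrivial subspace, plus the trivial part, which sums exactly to $s_N$.

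For part (2), I would condition on the dichotomy. By (1), the bridge law gives probability $t_m/u_N(m)=\theta_N+o(1)$ to the event that the lift closes. Conditionally on that event the path is a tree bridge, and by Benjamini--Izkovsky--Kesten its range divided by $m$ converges to $\alpha$ in probability. On the complementary event the lift is a tree path of length $m$ ending at distance at least $g_N$. For the $W_m$ moments I would show its range concentrates at $\beta$. The law of a tree walk conditioned to end at a far point $v$ is a bridge with drift, and the projection is injective on the ball of radius $g_N/2$. Distinct tree vertices visited far apart project to distinct vertices except on a set of $o(m)$ collisions, because loops in $G_N$ have length at least $g_N$ and a transient walk revisits such a loop rarely. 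The range on $\TT$ of a walk escaping at linear-ish speed is $\beta m(1+o(1))$, matching the unconditioned tree range constant. The difficulty here is that the conditional escape speed is $\approx d/m$ rather than $(D-2)/D$. I would argue via excursion decomposition along the geodesic to $v$: the range fraction is governed by the tree-bridge constant for the backtracking excursions plus the geodesic steps, and it yields $\beta$ when the endpoint distance is at most $m\cdot o(1)$ relative to the total. Since $W_m\in[0,1]$, convergence in probability on each piece gives the moments, and the mixture follows by the method of moments.
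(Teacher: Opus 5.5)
The gap is in part (2), on the event $\mathcal C_m^c$ that the lift does not close. You do not identify the law of the bridge there, and the heuristic you offer points the wrong way. A tree walk pinned at a vertex $v$ with $d_{\TT}(\widetilde o,v)=o(m)$ is locally a tree bridge, since $P^{m-r}_{\TT}(\widetilde o,v)/P^{m}_{\TT}(\widetilde o,v)\to\rho^{-r}$ for fixed $r$. Short loops therefore carry the same extra weight as under a closed lift, and the range is about $\alpha m$, not $\beta m$. Pinning at distance $vm$ with $0<v<(D-2)/D$ gives a range fraction strictly between $\alpha$ and $\beta$. The constant $\beta=1-F(1)$ is the escape probability of the \emph{free} walk. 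It appears because on $\mathcal C_m^c$ the bridge is locally unconditioned: the endpoints in $\pi^{-1}(o)\setminus\{\widetilde o\}$ that carry the mass $u_N(m)-t_m\approx s_N$ typically lie at tree distance about $\frac{D-2}{D}m$. Along your route you would need three things, none of which the proposal supplies: (a) equidistribution of $\pi^{-1}(o)$ on tree spheres of those radii; (b) a law of large numbers for the range of tree walks pinned at such distances; and (c) a quantitative bound on the collisions created by projecting a lift that leaves the injectivity ball. The paper never conditions on the endpoint. It replaces $W_m$ by the local indicators $Q_i$, computes their joint moments by exact loop deletion, and uses $u_N(m-r)/u_N(m)\to(1-\theta_N)+\theta_N\rho^{-r}$. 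Deleting a loop leaves the stationary term $s_N$ unchanged, which yields $\beta_L$, while the tree term gains $\rho^{-r}$, which yields $\alpha_L$. The intermediate-return bound then controls $\BEx|Y_{m,L}-W_m|$, which is precisely the collision estimate (c). This also produces $\alpha$ without quoting Benjamini--Izkovsky--Kesten.

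Part (1) is essentially the paper's polynomial-approximation argument in a different basis. Your expansion $x^m=\sum_kc_{m,k}\,q_k(x)/q_k(1)$, with $c_{m,k}=\PP(d_{\TT}(\widetilde o,\widetilde X_m)=k)$, replaces the paper's $x^r=\EE T_{|S_r|}(x)$, and it has the appeal of reproducing the lift decomposition $u_N(m)=t_m+R_N(m)$ exactly. The bookkeeping at the cutoff is off, however: $\int q_k\dd\mu_N=0$ for $1\le k<g_N$ holds for the full spectral measure, not for its nontrivial part. Cutting at $g_N$ and bounding the $k\ge g_N$ terms spectrally actually gives $u_N(m)=t_m+s_N\PP(d_{\TT}(\widetilde o,\widetilde X_m)\ge g_N)+o(t_m)$. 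The deficit $s_N\PP(1\le d_{\TT}(\widetilde o,\widetilde X_m)<g_N)$ is not obviously $o(t_m)$, because $s_N\asymp t_m$ near the transition. The fix is the paper's choice of a cutoff $K=\lfloor(\log N)^{2/3}\rfloor$ well below $g_N$. Then the trivial deficit is at most $Cs_N\rho^m(D-1)^{K/2}=N^{-1+o(1)}\rho^m$ and the tail is $O(\rho^m me^{-K^2/(2m)})$, both $o(t_m)$ uniformly. Also, the Gaussian factor belongs to $c_{m,k}(D-1)^{-k/2}$, not to $c_{m,k}$ itself, which is of order $m^{-1/2}$ for $k\approx\frac{D-2}{D}m$.
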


In the bounded window, the range has the formula below. The matching
first-return estimate has stationary coefficient $\beta^2$, reflecting
avoidance of the root near both ends of the bridge.

\begin{theorem}\label{thm:critical}
Under the hypotheses of Theorem~\ref{thm:main}, let $w_N\ge0$ satisfy
$w_N=o(\log N)$. Uniformly over even integers $|m-m_N^*|\le w_N$,
\begin{equation}\label{eq:firstcritical}
 f_N(m)=\beta^2s_N+f_{\mathbb T}(m)+o(t_m)
       =\beta^2s_N+(1+o(1))\alpha^2t_m.
\end{equation}
If $m-m_N^*\to x\in\mathbb R$, then
\begin{equation}\label{eq:criticalmixture}
 W_m\ \Rightarrow\
 \frac{1}{1+e^{\gamma x}}\delta_\alpha+
 \frac{e^{\gamma x}}{1+e^{\gamma x}}\delta_\beta.
\end{equation}
In particular,
\[
 \BEx W_m\longrightarrow
 \frac{\alpha+\beta e^{\gamma x}}{1+e^{\gamma x}}.
\]
If $m-m_N^*\to-\infty$ or $+\infty$, with $|m-m_N^*|=o(\log N)$,
then $W_m$ converges in probability to $\alpha$ or $\beta$, respectively.
\end{theorem}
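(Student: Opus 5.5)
The proof has two parts: the first-return estimate \eqref{eq:firstcritical}, and the mixture statements, which follow from Theorem~\ref{thm:main} once $\theta_N$ is computed. We start with the easier second part. Uniformly for $|m-m_N^*|\le w_N=o(\log N)$ we have $m\asymp\log N$ and $\log N/(\gamma m)\to1$. The displayed asymptotics for $t_m/s_N$ then give
\[
\frac{t_m}{s_N}=(1+o(1))e^{-\gamma(m-m_N^*)},
\]
so $\theta_N=(1+e^{\gamma(m-m_N^*)}(1+o(1)))^{-1}$. If $m-m_N^*\to x$, then $\theta_N\to(1+e^{\gamma x})^{-1}$, and \eqref{eq:mixture} yields \eqref{eq:criticalmixture}. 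The limit of $\BEx W_m$ follows from \eqref{eq:moments} with $k=1$, since $W_m\in[0,1]$. If $m-m_N^*\to\mp\infty$, then $\theta_N\to1$ or $0$, and the limit is a point mass, which gives convergence in probability.

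For \eqref{eq:firstcritical} we use the renewal decomposition $u_N(m)=\sum_{r=1}^m f_N(r)u_N(m-r)$ and compare it with the tree identity $t_m=\sum f_{\mathbb T}(r)t_{m-r}$. The tree input is the classical asymptotics $f_{\mathbb T}(m)\sim\alpha^2t_m$. This follows from the generating-function relation $U=1/(1-F)$ at the singularity $1/\rho$, using $U(1/\rho)=2(D-1)/(D-2)=\alpha^{-1}$. Indeed $F=1-1/U$ gives $1-F\sim$ square-root behaviour with $F' \approx U'/U^2$, and this is the source of the factor $\alpha^2$.

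We then decompose paths by whether the lift closes. Write $f_N(m)=f^{\rm tree}(m)+f^{\rm glob}(m)$. Here $f^{\rm tree}$ counts paths whose lift is a tree first-return. The projection of such a path could revisit $o$ only through a non-closed sub-lift, which forces length at least $g_N\ge c\log N$. So $f^{\rm tree}(m)=f_{\mathbb T}(m)-\varepsilon$, where $\varepsilon$ is bounded by a sum of products of tree terms and global returns of length $\ge g_N$. We expect $\varepsilon=o(t_m)$, using \eqref{eq:twoterm} and the exponential decay $\rho^m$. For the global part, we condition on the first and last times the walk is at tree-distance $\ge L$, for a slowly growing $L$. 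A global first return is then an excursion leaving the root, taking a long mixing segment, and coming back. The mixing segment contributes $s_N(1+o(1))$ by \eqref{eq:twoterm} applied to intermediate times, since the tree contribution there is negligible. The two end segments contribute the probabilities that a tree walk escapes without returning, and the escape probability from the root on $\TT$ is $(D-2)/(D-1)=\beta$. Hence $f^{\rm glob}(m)=\beta^2s_N+o(s_N)+o(t_m)$. Since $s_N\asymp t_m$ in the window up to factors $e^{\gamma w_N}$, we must track the errors relative to $s_N+t_m$. The second equality in \eqref{eq:firstcritical} is then the tree asymptotics.

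The main obstacle is making the global term rigorous with error $o(s_N+t_m)$ uniformly. We must show that paths which return to $o$ at an intermediate time $r$ via a global loop and then return again are negligible; the last-exit decomposition handles this. We also need the renewal convolution $\sum_r f_N(r)u_N(m-r)$ to match without losing constants. To do this we first try to solve the renewal equation directly: substitute the ansatz $f_N(r)=\beta^2 s_N\mathbf 1_{r\ge g_N}+f_{\mathbb T}(r)+\text{error}$, insert $u_N=s_N+(1+o(1))t$ from \eqref{eq:twoterm}, and use $\sum_r f_{\mathbb T}(r)=1$ and $\sum_r t_r=\alpha^{-1}$. Then $s_N$ coefficients match, since $s_N=\beta^2 s_N\cdot\alpha^{-1}\cdot\alpha^{-1}\cdot\alpha^2\cdots$; precisely, the stationary part gives $s_N\alpha^{-1}\cdot$(global first-return mass)$\cdot$. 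This yields the coefficient $\beta^2$ after checking $\beta=2\alpha$ bookkeeping. Inverting the convolution then bounds the error by induction on $m$.
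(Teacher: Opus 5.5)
Your derivation of the range statements is correct and matches the paper. In the window $t_m/s_N=(1+o(1))e^{-\gamma(m-m_N^*)}$, so $\theta_N\to(1+e^{\gamma x})^{-1}$ (or to $1$, $0$), and Theorem~\ref{thm:main}(2) finishes the argument. Your reduction of the closed-lift part to $f_{\mathbb T}(m)-o(t_m)$ is also fine; it follows from the Gaussian tail in Lemma~\ref{lem:treeheight} with $g_N\ge c\log N$.

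The gap is the stationary part of \eqref{eq:firstcritical}, and there are two problems with it.

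\emph{Your error target is too weak.} The statement demands an error $o(t_m)$ uniformly over $|m-m_N^*|\le w_N$. Since $w_N$ may tend to infinity, $s_N/t_m$ can be as large as $e^{\gamma w_N(1+o(1))}\to\infty$. There, a bound $f^{\mathrm{glob}}(m)=\beta^2s_N+o(s_N)+o(t_m)$, or errors tracked relative to $s_N+t_m$, does not give \eqref{eq:firstcritical}. The coefficient of $s_N$ must be pinned down to relative accuracy $o(t_m/s_N)$, for example $N^{-\delta}$, and a slowly growing $L$ is not enough for that. The paper takes $L=\lfloor\eta\log N\rfloor$, so that $s_N\rho^L/t_m=N^{-\gamma\eta+o(1)}$ and $ms_N^2/t_m=N^{-1+o(1)}$.

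\emph{Your mixing segment is not covered by \eqref{eq:twoterm}.} That estimate is a trace identity for the diagonal $P_N^r(o,o)$. Your middle segment instead joins two vertices at distance $L$ from $o$ and must avoid $o$. You would need an off-diagonal taboo estimate with error $o(t_m)$, and you do not supply one.

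The renewal-inversion fallback does not close either.
\begin{enumerate}
\item Its constants are wrong. Since the tree walk is transient, $\sum_rf_{\mathbb T}(r)=F(1)=1-\beta$, not $1$. Also $\sum_rt_r=U(1)=\beta^{-1}$; it is $U(\rho^{-1})$ that equals $\alpha^{-1}$. The factors of $\beta$ come exactly from these values. With the correct values your ansatz is self-consistent, since $s_N(1-F(1))-\beta^2s_N(U(1)-1)=\beta^2s_N$, but with yours the bookkeeping cannot produce $\beta^2$.
\item More seriously, ``inverting the convolution by induction on $m$'' with absolute bounds loses control. An error $\varepsilon_N\rho^r$ at time $r$ is fed back with weight $t_{m-r}$, and $\sum_{j\ge1}t_j\rho^{-j}=\alpha^{-1}-1=D/(D-2)>1$. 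So the constant produced by the induction grows exponentially in $m\asymp\log N$, i.e.\ polynomially in $N$. This swamps the subpolynomial $\varepsilon_N$ of Lemma~\ref{lem:approx}.
\end{enumerate}

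The paper never inverts the renewal equation. It applies inclusion--exclusion to first returns only in the windows $[1,L]$ and $[m-L,m-1]$, where $f_N=f_{\mathbb T}$ because $L<g_N$. This writes the end-avoiding return probability $A_N(m,L)$ exactly as a two-level combination of $u_N(m)$, $u_N(m-r)$ and $u_N(m-r-s)$. Its error multipliers sum to at most $(1+F(\rho^{-1}))^2$, and its stationary parts combine to $s_N\beta_L^2$. Lemma~\ref{lem:middle} then shows that returns at times in $(L,m-L)$ cost only $O(t_mL^{-1/2}+s_N\rho^L+ms_N^2)$, with the analogous bound $O(t_mL^{-1/2})$ on the tree side.
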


Since bridge lengths are even, the fixed-offset assertions are along
admissible subsequences. The width is bounded independently of $N$: adding
a fixed even number $r$ of steps multiplies $t_m/s_N$ asymptotically by
$\rho^r$. Thus only a bounded change in length is needed to change the
relative weights by a fixed factor.

The critical range law describes coexistence of two values. At offset
$x=0$, asymptotically half the bridges have normalized range near $\alpha$
and half near $\beta$. The mean varies smoothly with $x$, while these two
values remain fixed. In fact, the first two moments in
Theorem~\ref{thm:main} give, at every fixed finite offset,
\[
 \operatorname{Var}_{\BPr}(W_m)\longrightarrow
 (\beta-\alpha)^2\frac{e^{\gamma x}}{(1+e^{\gamma x})^2}>0.
\]
This positive limiting variance makes the distinction between the mixture
and its mean explicit. The logarithmic-girth hypothesis is used throughout
these critical-window conclusions.

Strictly above the spectral transition, concentration of the range needs
only that the girth tends to infinity.

\begin{theorem}\label{thm:supercritical}
Suppose only that $G_N$ is Ramanujan, $g_N\to\infty$, and
$m_N/\log N\to C\in(\gamma^{-1},\infty)$ through even times. Then
\[
 W_{m_N}\xrightarrow{\PP}\beta.
\]
\end{theorem}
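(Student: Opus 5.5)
Since $W_m\le1$, it suffices to show that the first and second moments satisfy $\BEx[W_m]\to\beta$ and $\BEx[W_m^2]\to\beta^2$; Chebyshev then gives $W_m\to\beta$ in probability. We write $V_m=\sum_{j=0}^{m-1}\one\{X_i\ne X_j\ \forall i<j\}$, or equivalently count $j$ such that $X_j$ is not revisited on $(j,m-1]$. By reversibility of the bridge, each summand is a ``no future return within the remaining time'' event. For the bridge, $\BPr(X_j\text{ not revisited in }(j,m))$ equals a sum of products of the form $u_N(j)\,\PP_o(\tau_o^+>\ell\text{ and }X_{\ell'}=o\text{ via paths avoiding }o)/u_N(m)$, after transitivity has been used to recenter at $X_j$. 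The plan is to estimate these products using only spectral information, which is available without logarithmic girth.

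\textbf{Step 1 (return probabilities).} With only $g_N\to\infty$ we still have, for the Ramanujan graph and $m\ge C'\log N$ with $C'>\gamma^{-1}$,
\[
u_N(m)=s_N+O(\rho^m)\,\mathrm{poly}(m),
\]
by the spectral decomposition plus the bound on $\sum_{\lambda\ne\pm1}\lambda^m$. To get something sharper than the trivial bound $N\rho^m$, we use the tree-comparison (Kesten--McKay) estimate: the nontrivial spectral mass at $o$ is bounded in terms of the local tree structure. In the supercritical regime $\rho^m m^{-3/2}\ll 1/N$, so $u_N(m)\sim s_N$ there. For intermediate times $r$ we use the crude bound $u_N(r)\le s_N+C t_r+\varepsilon_N\rho^r$, where $\varepsilon_N\to 0$ comes from $g_N\to\infty$. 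This holds because paths of length $r$ whose lift does not close must have length at least $g_N$, and their number is controlled spectrally.

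\textbf{Step 2 (first moment).} Fix a large $L$. A position $j$ in the bulk, with $L\le j\le m-L$, counts toward $V_m$ iff the walk never returns to $X_j$ in the remaining $m-1-j$ steps. Decomposing the bridge at $j$ and at the last return, we get
\[
\BPr(\text{count }j)=\sum_{\ell}\frac{u_N(j)\,q(\ell)}{u_N(m)},
\]
where $q(\ell)$ is the probability that the walk from $X_j$ reaches $o$ at time $m-j$ without revisiting $X_j$. Since $u_N(j)u_N(m-j)/u_N(m)\approx s_N$-weights dominate, and since the tree escape probability is $\beta$ (the walk on $\TT$ never returns with probability $\tfrac{D-2}{D-1}$), this gives $\BPr(\text{count }j)\to\beta$ uniformly over bulk $j$ when $L\to\infty$ slowly. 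Because $g_N\to\infty$, returns to $X_j$ within a local time window see only the tree, and long returns have probability $O(s_N\cdot m)=o(1)$. The edge terms $j<L$ or $j>m-L$ contribute $O(L/m)$.

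\textbf{Step 3 (second moment).} For two bulk positions $j<k$ with $k-j\to\infty$, the counting events decorrelate: the joint probability factors into tree escape probabilities $\beta^2$ up to $o(1)$ by the same decomposition with two cut points, and pairs with $k-j$ small are negligible. The main obstacle is Step 1 without logarithmic girth. We cannot isolate closed lifts, so we must show directly that the factors $u_N(r)$ for $r$ between $L$ and $m-L$ do not make the bridge concentrate on paths where one segment has an atypically large return probability. This requires $\sum_j u_N(j)u_N(m-j)/u_N(m)$ restricted to $j\ge L$ to be controlled by $s_N$ terms. I would handle it by showing that $\rho^{j}\rho^{m-j}=\rho^m\ll s_N$ makes the cross tree--tree and tree--stationary contributions $o(s_N)$ once $m>C'\log N$, using $\sum_j t_j\rho^{m-j}$-type bounds.
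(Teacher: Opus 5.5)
Your overall plan is close to the paper's and can be completed: second moments, short loops identified with tree loops because $g_N\to\infty$, $u_N(m-r)/u_N(m)\to1$, and a negligible intermediate-return sum. As written, though, Step 1 rests on a miscalculation.

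\textbf{Step 1.} By vertex transitivity $u_N(r)=N^{-1}\operatorname{tr}P_N^r$. So the nontrivial eigenvalues contribute at most $\rho^r$ in absolute value, not $N\rho^r$, and a nonnegative amount at even $r$. Since $m/\log N\to C>\gamma^{-1}$ gives $mN\rho^m\to0$, this trivial bound already yields $u_N(m)\sim s_N$ and $u_N(m-r)/u_N(m)\to1$ for fixed even $r$. Expanding $u_N(r)\le s_N+\rho^r$ and using $u_N(m)\ge s_N$ then gives
\[
 \sum_{r=L}^{m-L}\frac{u_N(r)u_N(m-r)}{u_N(m)}\le C\Bigl(\rho^L+\frac mN+mN\rho^m\Bigr),
\]
which is the paper's \eqref{eq:supermiddle}. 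Because of the sum over $r$, you need $m\rho^m\ll s_N$, not just $\rho^m\ll s_N$; strict supercriticality supplies this. You should therefore drop the Kesten--McKay detour, the $m^{-3/2}$ prefactor, and the claimed bound $u_N(r)\le s_N+Ct_r+\varepsilon_N\rho^r$ with $\varepsilon_N\to0$. That bound is not established by your argument: ``nonclosed lifts have length $\ge g_N$'' only gives $u_N(r)=t_r$ for $r<g_N$. The polynomial approximation of Lemma~\ref{lem:approx} needs $g_N\gg\sqrt{\log N}$ to make $\varepsilon_N\to0$ at $r\asymp\log N$.

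\textbf{Step 2.} The displayed identity is not correct: there is no factor $u_N(j)$, and there is no last return to decompose at. A first-return decomposition after time $j$, together with transitivity, gives exactly
\[
 \BPr\bigl(X_j\notin\{X_{j+1},\dots,X_{m-1}\}\bigr)=1-\sum_{r=1}^{m-1-j}f_N(r)\frac{u_N(m-r)}{u_N(m)}.
\]
For $L\le j<m-L$, split at $r\le L<g_N$, where $f_N=f_{\mathbb T}$, and bound $f_N\le u_N$ on $L<r<m-L$ by the display above. This gives $\beta_L+O(\rho^L)+o(1)$.

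\textbf{Step 3.} The ``two cut points'' factorization does not apply directly to your global last-visit indicators $G_j$. For $j<k$, the first-return loop from $X_j$ can straddle or contain time $k$, so inclusion--exclusion does not produce terms $f_N(r_1)f_N(r_2)u_N(m-r_1-r_2)$. You must first replace $G_j$ by the local indicator $Q_j=\one\{X_j\notin\{X_{j+1},\dots,X_{j+L}\}\}$. Use $0\le Q_j-G_j\le\sum_{r=L+1}^{m-1-j}\one\{X_{j+r}=X_j\}$, whose expectation is controlled by \eqref{eq:pair} and the display above. Only then factor over the disjoint windows $[j,j+L]$ and $[k,k+L]$, with $k-j>L$, to get $\beta_L^2+o(1)$.

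With that truncation, your argument is a per-index version of the paper's. The paper works with $Y_{m,L}=m^{-1}\sum_iQ_i$ from the start and compares it to $W_m$ by the deterministic cyclic-gap inequality of Lemma~\ref{lem:rangeapprox}. It then computes $\BEx Y_{m,L}^k$ by loop deletion with $L$ fixed, letting $L\to\infty$ only after $N\to\infty$. Your last-visit decomposition replaces the cyclic-gap lemma with the pointwise truncation bound plus an $O(L/m)$ edge term. The substantive estimate is the same.
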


The return estimate follows by polynomial approximation, and lifting turns it
into the distance transition. For the range, we approximate its size by local
visit indicators and compute their joint moments by deleting short return
loops. A uniform bound on intermediate returns controls the approximation
error and also gives the first-return estimate. The value $\alpha$ agrees with the tree-bridge range constant in
\cite{BIK}; the estimates used here are proved below.

\section{Tree returns and polynomial approximation}

The tree probabilities determine both the critical center and the two range
constants: the former through their exponential decay and polynomial prefactor,
the latter through their generating function.

\begin{lemma}\label{lem:tree}
Let $F(z)=\sum_{r\ge1}f_{\mathbb T}(r)z^r$ and
$U(z)=\sum_{r\ge0}t_rz^r$. Then
\begin{equation}\label{eq:treegf}
 F(z)=\frac{D}{2(D-1)}\bigl(1-\sqrt{1-\rho^2z^2}\bigr),\qquad
 U(z)=\frac{2(D-1)}{D-2+D\sqrt{1-\rho^2z^2}}.
\end{equation}
In particular,
\begin{equation}\label{eq:escapes}
 1-F(\rho^{-1})=\alpha,\qquad 1-F(1)=\beta,
\end{equation}
and, as $m\to\infty$ through even integers,
\begin{equation}\label{eq:treeasymptotic}
 t_m\sim b_Dm^{-3/2}\rho^m,\qquad
 f_{\mathbb T}(m)\sim\alpha^2t_m.
\end{equation}
There are positive constants depending only on $D$ such that
\begin{equation}\label{eq:treebounds}
 c_D\frac{\rho^m}{(m+1)^{3/2}}\le t_m\le
 C_D\frac{\rho^m}{(m+1)^{3/2}}\qquad(m\ge0\text{ even}).
\end{equation}
\end{lemma}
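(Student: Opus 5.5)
The plan is the classical first-step decomposition on the $D$-regular tree, followed by singularity analysis of the resulting algebraic generating functions.

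\textbf{Generating functions.} Let $\phi(z)=\sum_r \PP(\text{first hit of the parent}=r)z^r$ be the generating function of the hitting time of a fixed neighbour from a non-root vertex.
\begin{enumerate}
\item From a non-root vertex the walk steps toward the parent with probability $1/D$. Otherwise it must first return from a child to the current vertex and then try again. This gives
\[
\phi=\frac zD+\frac{(D-1)z}{D}\phi^2 .
\]
\item Taking the root of this quadratic with $\phi(0)=0$,
\[
\phi(z)=\frac{D\bigl(1-\sqrt{1-\rho^2z^2}\bigr)}{2(D-1)z}.
\]
Here we use $4(D-1)z^2/D^2=\rho^2z^2$.
\item Since $F(z)=z\phi(z)$, we obtain the stated formula for $F$. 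Then $U=1/(1-F)$, and rationalizing gives the displayed $U$.
\item At $z=\rho^{-1}$ the square root vanishes, so $1-F=1-\frac{D}{2(D-1)}=\frac{D-2}{2(D-1)}=\alpha$.
\item At $z=1$ we have $\sqrt{1-\rho^2}=\frac{D-2}{D}$, so $F(1)=\frac{D}{2(D-1)}\cdot\frac2D=\frac1{D-1}$ and $1-F(1)=\beta$.
\end{enumerate}

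\textbf{Asymptotics.} Both $F$ and $U$ are even functions of $z$, analytic in the slit disc of radius $\rho^{-1}$. Their only dominant singularities are $z=\pm\rho^{-1}$, and these are of square-root type.
\begin{enumerate}
\item Near $z=\rho^{-1}$, write $\sqrt{1-\rho^2z^2}\approx\sqrt2\sqrt{1-\rho z}$.
\item Expanding $U$ gives
\[
U(z)=\frac{2(D-1)}{D-2}-\frac{2(D-1)D}{(D-2)^2}\sqrt2\sqrt{1-\rho z}+O(1-\rho z).
\]
\item The transfer theorem uses $[z^m](1-\rho z)^{1/2}\sim -\rho^m m^{-3/2}/(2\sqrt\pi)$. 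The contribution from $z=-\rho^{-1}$ is identical at even $m$ and cancels at odd $m$, so it doubles the coefficient at even $m$.
\item Combining these, for even $m$,
\[
t_m\sim 2\cdot\frac{2\sqrt2\,D(D-1)}{(D-2)^2}\cdot\frac{1}{2\sqrt\pi}\,m^{-3/2}\rho^m=b_Dm^{-3/2}\rho^m.
\]
\item For $F$, use $F=1-1/U$. Differentiating with respect to the singular part $\sqrt{1-\rho z}$, the singular coefficient of $F$ equals that of $U$ multiplied by $U(\rho^{-1})^{-2}=\alpha^2$. Hence $f_{\mathbb T}(m)\sim\alpha^2t_m$.
\end{enumerate}
Alternatively, $F$ can be expanded directly with the binomial series, which gives an exact closed form.

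\textbf{Uniform bounds.} The two-sided bound~\eqref{eq:treebounds} follows from the asymptotic for large even $m$. For the finitely many small even $m$ we use $t_m>0$, which holds since $t_m\ge D^{-m}$. The constants depend only on $D$.

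\textbf{Main obstacle.} The only step needing care is justifying the transfer for $U$, which is not a pure power of $(1-\rho^2z^2)$. The denominator $D-2+D\sqrt{1-\rho^2z^2}$ has no zeros in the $\Delta$-domain, because $D-2>0$ and $\operatorname{Re}\sqrt{\cdot}\ge0$. So $U$ is $\Delta$-analytic, and its expansion in powers of $\sqrt{1-\rho^2z^2}$ justifies the Flajolet–Odlyzko transfer.

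A more elementary route avoids this: use $F$'s explicit binomial coefficients $\binom{1/2}{k}$, which are asymptotically explicit by Stirling. Then derive $t_m$ through the renewal equation $t=\delta_0+f*t$ and the standard subexponential-sequence lemma for $\rho^{-m}f_{\mathbb T}(m)$. That lemma gives $t_m\sim f_{\mathbb T}(m)/(1-F(\rho^{-1}))^2$, which recovers both asymptotics at once.
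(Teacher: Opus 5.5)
Your proof is correct. The generating-function part (first-step equation for the parent-hitting time, root with zero constant term, $F=z\phi$, renewal, evaluation at $\rho^{-1}$ and $1$) is the same as the paper's. The routes diverge in how \eqref{eq:treeasymptotic} is extracted.

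You apply Flajolet--Odlyzko transfer at the two conjugate singularities $\pm\rho^{-1}$. That requires the continuation check you supply: $\operatorname{Re}\sqrt{1-\rho^2z^2}\ge0$ keeps the denominator of $U$ away from zero on the doubly slit plane. It also requires the parity bookkeeping, which you handle correctly.

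The paper uses no complex analysis. It rewrites $U(z)=\bigl(D\sqrt{1-\rho^2z^2}-(D-2)\bigr)/\bigl(2(1-z^2)\bigr)$ and writes $1-\sqrt{1-w}=\sum_{j\ge1}c_jw^j$. Using the identity $\frac D2\sum_{j\ge1}c_j\rho^{2j}=1$, it reads off the exact formulas $f_{\mathbb T}(2n)=\frac{D}{2(D-1)}c_n\rho^{2n}$ and $t_{2n}=\frac D2\sum_{j>n}c_j\rho^{2j}$. Stirling for $c_j$ and a geometric tail sum then give both asymptotics.

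This exact tail-sum representation replaces your transfer theorem. It also replaces the subexponential renewal lemma in your alternative route. That lemma must be applied to $n\mapsto\rho^{-2n}f_{\mathbb T}(2n)$: the odd terms vanish, so the sequence indexed by $m$ is not subexponential.

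Your approach has its own advantages. It does not depend on a fortunate closed form for the coefficients, and it would give full asymptotic expansions. It also explains the constant: the singular part of $F=1-1/U$ is $U(\rho^{-1})^{-2}=(1-F(\rho^{-1}))^2$ times that of $U$. So $\alpha^2$ appears structurally, rather than as the quotient of two separately computed constants as in the paper.
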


\begin{proof}
Let $H(z)$ be the generating function of the time to first hit the parent
from a child. A first-step decomposition gives
\[
 H(z)=\frac zD+\frac{D-1}{D}zH(z)^2.
\]
Solving for the power series with zero constant term and using $F=zH$
gives the first formula in \eqref{eq:treegf}. Renewal gives $U=(1-F)^{-1}$,
which proves the second formula and \eqref{eq:escapes}.

For completeness, put
\[
 c_j=\frac{\binom{2j}{j}}{4^j(2j-1)}\quad(j\ge1),\qquad
 1-\sqrt{1-w}=\sum_{j\ge1}c_jw^j.
\]
Rationalizing $U$ gives
\[
 U(z)=\frac{D\sqrt{1-\rho^2z^2}-(D-2)}{2(1-z^2)}.
\]
Since $\frac D2\sum_{j\ge1}c_j\rho^{2j}=1$, coefficient comparison yields
\[
 f_{\mathbb T}(2n)=\frac{D}{2(D-1)}c_n\rho^{2n}\quad(n\ge1),\qquad
 t_{2n}=\frac D2\sum_{j>n}c_j\rho^{2j}\quad(n\ge0).
\]
Stirling's formula gives $c_j\sim(2\sqrt\pi)^{-1}j^{-3/2}$.
The geometric tail in the last display therefore gives
\[
 t_{2n}\sim\frac{D(D-1)}{\sqrt\pi(D-2)^2}n^{-3/2}\rho^{2n},\qquad
 f_{\mathbb T}(2n)\sim\frac{D}{4(D-1)\sqrt\pi}n^{-3/2}\rho^{2n}.
\]
These prove \eqref{eq:treeasymptotic}; adjusting constants at finitely many
times gives \eqref{eq:treebounds}.
\end{proof}

Let $\mu_N$ be the empirical spectral measure of $P_N$, and let
$\mu_{\mathbb T}$ be the spectral measure of the tree transition operator
at its root. Vertex transitivity implies
\[
 u_N(r)=\frac1N\operatorname{tr}(P_N^r)
       =\int\lambda^r\dd\mu_N(\lambda),\qquad
 t_r=\int\lambda^r\dd\mu_{\mathbb T}(\lambda).
\]
The tree measure is supported on $[-\rho,\rho]$. One elementary way to
obtain this support bound is to use the positive weight
$h(x)=(D-1)^{-d(o,x)/2}$. Its neighbor sum is $2\sqrt{D-1}\,h(x)$ away
from the root and no larger at the root. The weighted Schur bound gives
$\|P_{\mathbb T}\|\le\rho$.

For $r<g_N$, a closed walk on $G_N$ lifts to a closed tree walk: otherwise,
after deleting immediate reversals, it would contain a cycle of length
at most $r$. Unique path lifting therefore gives $u_N(r)=t_r$, and hence
\begin{equation}\label{eq:momentmatch}
 \int Q\dd\mu_N=\int Q\dd\mu_{\mathbb T}
 \qquad\text{for every polynomial }\deg Q<g_N.
\end{equation}

The same lifting argument preserves returns in each initial segment and
therefore also gives $f_N(r)=f_{\mathbb T}(r)$ for $r<g_N$.

Low-degree moments suffice to estimate returns at logarithmic times.
A high power can be approximated by a polynomial of degree below the girth,
leaving the trivial eigenvalues as a separate contribution.

\begin{lemma}\label{lem:approx}
Assume the hypotheses of Theorem~\ref{thm:main} and fix $0<B<\infty$.
There is a sequence $\varepsilon_N$ with
$\varepsilon_N(\log N)^{3/2}\to0$ such that, for every integer
$1\le r\le B\log N$,
\begin{equation}\label{eq:uniformerror}
 |u_N(r)-\sigma_N(r)-t_r|\le\varepsilon_N\rho^r,
\end{equation}
where the contribution of the trivial eigenvalues is
\[
 \sigma_N(r)=
 \begin{cases}
  1/N,&G_N\text{ nonbipartite},\\
  (1+(-1)^r)/N,&G_N\text{ bipartite}.
 \end{cases}
\]
\end{lemma}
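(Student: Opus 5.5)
We split $\mu_N$ into its trivial part and its nontrivial part, and control the nontrivial part through moment matching with a polynomial of degree below the girth. Write $\mu_N=\sigma\text{-part}+\nu_N$, where the trivial atoms at $1$ (and at $-1$ in the bipartite case) each carry mass $1/N$. Their contribution to $\int\lambda^r\dd\mu_N$ is exactly $\sigma_N(r)$. The remainder $\nu_N$ is a positive measure of mass at most $1$, supported in $[-\rho,\rho]$ by the Ramanujan hypothesis. It then suffices to show
\[
 \Bigl|\int\lambda^r\dd\nu_N-\int\lambda^r\dd\mu_{\mathbb T}\Bigr|\le\varepsilon_N\rho^r .
\]
Fix a polynomial $Q$ of degree $d<g_N$, with $d\asymp\log N$ because $g_N\ge c\log N$. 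By \eqref{eq:momentmatch}, $\int Q\dd\mu_N=\int Q\dd\mu_{\mathbb T}$. Hence
\[
 \int\lambda^r\dd\nu_N-\int\lambda^r\dd\mu_{\mathbb T}
 =\int(\lambda^r-Q)\dd\nu_N-\int(\lambda^r-Q)\dd\mu_{\mathbb T}
 +\bigl(\textstyle\int Q\dd\mu_{\mathbb T}-\int Q\dd\nu_N\bigr),
\]
and the last bracket equals the trivial-atom term $\frac1N\bigl(Q(1)+Q(-1)\,[\text{bip}]\bigr)$. Both measures in the first two terms live on $[-\rho,\rho]$. The error is therefore at most
\[
 2\sup_{[-\rho,\rho]}|\lambda^r-Q|+\frac{2}{N}\,|Q(\pm1)|.
\]

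We choose $Q$ as the best (Chebyshev) approximation of $\lambda^r$ on $[-\rho,\rho]$ of degree $d$. After rescaling $\lambda=\rho x$, $\lambda^r=\rho^rx^r$. The classical bound (Newman–Rivlin) gives
\[
 \operatorname{dist}_{[-1,1]}(x^r,\mathcal P_d)\le 2\exp\bigl(-d^2/(2r)\bigr),
\]
obtained by expanding $x^r$ in Chebyshev polynomials and truncating the binomial tail. A concrete choice is the truncated Chebyshev expansion $Q(\lambda)=\rho^r\sum_{k\le d}a_kT_k(\lambda/\rho)$. With $d$ of order $\log N$ and $r\le B\log N$, the exponent $d^2/(2r)$ is only of order $\log N$. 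The approximation error is then $\rho^r N^{-\kappa}$ for some $\kappa>0$, which is far smaller than $\rho^r(\log N)^{-3/2}$. If $d\ge r$, simply take $Q=\lambda^r$ and the error vanishes.

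The main obstacle is the trivial-atom term $|Q(1)|/N$, since $Q$ is evaluated outside $[-\rho,\rho]$ where Chebyshev polynomials grow. We bound it by the coefficient sizes. Since $|a_k|\le 1$ and $|T_k(1/\rho)|\le(1/\rho+\sqrt{1/\rho^2-1})^k=\rho_*^{k}$, we get $|Q(1)|\le\rho^r(d+1)\rho_*^{d}$. We therefore need
\[
 \rho_*^{d}\,d/N\ll(\log N)^{-3/2}.
\]
This holds if we take $d=\min(g_N-1,\delta\log N)$ with $\delta$ small enough that $\rho_*^{\delta\log N}\le N^{1/2}$, and the Chebyshev error is still $\exp(-\delta^2\log N/(2B))=N^{-\delta^2/2B}$. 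Here the girth is only needed up to $\delta\log N$. One checks the coefficient bound carefully: it is better to use the exact identity $Q=\rho^r\sum_{k\le d}a_kT_k(\lambda/\rho)$ with $\sum_k|a_k|\le1$, rather than a generic best approximant, so that $|Q(1)|$ is controlled. Setting $\varepsilon_N=N^{-\kappa}$ for a suitable $\kappa=\kappa(\delta,B,D)>0$ gives \eqref{eq:uniformerror} uniformly in $1\le r\le B\log N$, and $\varepsilon_N(\log N)^{3/2}\to0$.
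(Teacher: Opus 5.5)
Your proposal is correct and is essentially the paper's proof. Your truncated Chebyshev expansion with $a_k=\PP(|S_r|=k)$ is exactly the paper's $Q_{r,K}(x)=\EE[T_{|S_r|}(x)\one_{\{|S_r|\le K\}}]$, and you use the same binomial-tail bound on $[-1,1]$, the same growth bound $e^{\kappa K}$ at $\pm\rho^{-1}$ for the trivial atoms, and the same moment matching below the girth. The only real difference is that you truncate at degree $\lfloor\delta\log N\rfloor$ rather than the paper's $\lfloor(\log N)^{2/3}\rfloor$. This gives a polynomially small $\varepsilon_N$, but leans harder on the logarithmic girth; the paper's choice needs only $g_N>(\log N)^{2/3}$ for this step. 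Separately, the sign of your ``last bracket'' is reversed, which is harmless once you take absolute values.
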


\begin{proof}
Let $S_r$ be a sum of $r$ independent uniform signs, and let $T_j$ be the
$j$th Chebyshev polynomial. For $x=\cos t$,
$\EE\cos(S_rt)=(\cos t)^r$, so
\[
 x^r=\EE T_{|S_r|}(x).
\]
For an integer $K<g_N$, define
\[
 Q_{r,K}(x)=\EE\bigl[T_{|S_r|}(x)\one_{\{|S_r|\le K\}}\bigr].
\]
This polynomial has degree at most $K$. The exponential moment bound
$\EE e^{tS_r}=(\cosh t)^r\le e^{rt^2/2}$ gives
$\PP(|S_r|>K)\le2e^{-K^2/(2r)}$, and hence
\begin{equation}\label{eq:cheberror}
 \sup_{|x|\le1}|x^r-Q_{r,K}(x)|\le2e^{-K^2/(2r)}.
\end{equation}
If $\kappa=\operatorname{arcosh}(\rho^{-1})$, then
$|T_j(\pm\rho^{-1})|\le e^{\kappa j}$, so
\[
 |Q_{r,K}(\pm\rho^{-1})|\le e^{\kappa K}.
\]
Remove the atoms at the trivial eigenvalues from $\mu_N$. The remaining
positive measure, of mass at most one, is supported on $[-\rho,\rho]$.
Apply \eqref{eq:momentmatch} to $Q_{r,K}(\lambda/\rho)$, and then apply
\eqref{eq:cheberror} to this remaining measure and to $\mu_{\mathbb T}$.
The removed atoms have total mass at most $2/N$. Thus
\begin{equation}\label{eq:chebtrace}
 |u_N(r)-\sigma_N(r)-t_r|
 \le\rho^r\left(4e^{-K^2/(2r)}+\frac{2e^{\kappa K}}N\right).
\end{equation}
Take $K=\lfloor(\log N)^{2/3}\rfloor$, which is below $g_N$ for all
sufficiently large $N$, and set
\[
 \varepsilon_N=4e^{-K^2/(2B\log N)}+2N^{-1}e^{\kappa K}.
\]
This satisfies the required decay and proves the lemma.
\end{proof}

At even times $\sigma_N(r)=s_N$. Dividing
\eqref{eq:uniformerror} by \eqref{eq:treebounds} proves
Theorem~\ref{thm:main}(1) for $r\ge2$; the assertion at $r=0$ follows
from $u_N(0)=t_0=1$ and $s_N\to0$.

\section{Maximum distance from the root}

A tree bridge has height of order the square root of its length, while a
finite-graph bridge whose lift does not close must leave a tree ball around
the root. The return estimate then determines their probabilities.

The square-root estimate below also ensures, through its upper tail, that a
tree bridge of logarithmic length stays well inside the radius on which the
covering map preserves distances.

\begin{lemma}\label{lem:treeheight}
Let $H^{\mathbb T}_{2n}$ be the maximum distance from the root of a
$2n$-step simple random walk bridge on $\TT$. There are $c,C>0$, depending
only on $D$, such that, for all $n\ge1$ and $A\ge1$,
\begin{equation}\label{eq:treeheightupper}
 \PP\bigl(H^{\mathbb T}_{2n}\ge A\sqrt{2n}\mid X_{2n}=o\bigr)
 \le Ce^{-cA^2}.
\end{equation}
Moreover,
\begin{equation}\label{eq:treeheightlower}
 \lim_{a\downarrow0}\ \sup_{n\ge1}
 \PP\bigl(H^{\mathbb T}_{2n}\le a\sqrt{2n}\mid X_{2n}=o\bigr)=0.
\end{equation}
In particular, $H^{\mathbb T}_{2n}=\Theta_{\PP}(\sqrt n)$.
\end{lemma}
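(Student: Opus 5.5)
The plan is to reduce everything to the distance process $Y_j=d(o,X_j)$. On the tree this is a birth–death chain on $\{0,1,2,\dots\}$: from $0$ it moves to $1$, and from $y\ge1$ it moves up with probability $p=(D-1)/D$ and down with probability $q=1/D$. Conditioning on $X_{2n}=o$ is the same as conditioning on $Y_{2n}=0$. The bridge law of $Y$ is then an h-transform. A fixed path of $Y$ from $0$ back to $0$ has $n$ up-steps and $n$ down-steps. Among the $n$ up-steps, exactly $\ell$ start at level $0$, where $\ell$ is the number of visits to $0$ before time $2n$. So its weight is $p^{\,n-\ell}q^{\,n}$ (the $\ell$ up-steps from level $0$ have probability $1$). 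Rewriting this as $(pq)^{n}\,p^{-\ell}$ shows that the conditioned $Y$ is a uniformly chosen Dyck-type excursion path, reweighted by $p^{-\ell}$ with $p^{-1}=D/(D-1)$. This gives a concrete comparison with simple symmetric random walk bridges.

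\emph{Decomposition into excursions.} Using the reweighting, the tree bridge splits into a random number $\ell$ of excursions away from $0$. Conditionally on their lengths, each excursion is a uniform Dyck path. The joint law of the lengths has weight proportional to $(D/(D-1))^{\ell}\prod_i C_{k_i}$, where $C_k$ is the Catalan number. Equivalently, the total return generating function is the renewal sum of $F$ from Lemma~\ref{lem:tree}. Because $F(\rho^{-1})=1-\alpha<1$, this renewal is transient at the critical radius. Standard results on subexponential one-big-jump behaviour then apply: the coefficients of $F$ are of order $k^{-3/2}\rho^{-k}$-type after normalization. Hence, with probability bounded below uniformly in $n$, there is a single excursion of length at least $n$, say. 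In fact $\ell$ is tight and the remaining excursions have total length $O_{\PP}(1)$.

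\emph{Upper tail \eqref{eq:treeheightupper}.} The height $H$ is the maximum over the excursions of their heights. Two facts give the bound. First, a uniform Dyck path of length $2k$ has height at least $h$ with probability at most $Ce^{-ch^2/k}$; this is classical, via the reflection/ballot formula. Second, the number of excursions has geometric-type tails, because $p^{-\ell}$ together with the transience above gives $\PP(\ell\ge j)\le C(1-\alpha)^{j}$ up to polynomial factors. A union bound over excursions then yields $Ce^{-cA^2}$; a factor polynomial in the number of excursions is absorbed by the geometric tail. Alternatively, one can avoid the decomposition altogether. Since $p^{-\ell}\le p^{-2n}$ is too crude, I would instead bound the reweighted probability of the event $\{H\ge A\sqrt{2n}\}$ directly. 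The exponent $\ell$ is reduced only if the path stays high, so one compares with the unweighted ballot estimate and the partition function bound \eqref{eq:treebounds}. Here $t_{2n}\asymp n^{-3/2}\rho^{2n}$ matches the Dyck count $C_n(pq)^n\cdot O(1)$.

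\emph{Lower bound \eqref{eq:treeheightlower}.} On the event that there is one macroscopic excursion of length $k\ge n$, that excursion is a uniform Dyck path. Its height divided by $\sqrt{2k}$ converges to the maximum of a Brownian excursion. The same scaling gives $\PP(\text{height}\le a\sqrt{k})\to0$ uniformly as $a\downarrow0$, via the explicit theta-function formula or a small-ball bound $\le Ce^{-c/a^2}$ for the Dyck path. The probability that no excursion has length at least $n/M$ tends to $0$ as $M\to\infty$, uniformly in $n$. This holds because the total length of the non-maximal excursions is tight, which is the one-big-jump property. Combining these two facts and letting $a\downarrow0$ gives the claim.

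The main obstacle I expect is making the one-big-jump tightness uniform in $n$ with explicit constants. I would first attempt this with a direct computation from the coefficient formulas in the proof of Lemma~\ref{lem:tree}. These formulas give $f_{\mathbb T}(2k)\asymp k^{-3/2}\rho^{2k}$ and $t_{2n}\asymp n^{-3/2}\rho^{2n}$, and their ratios give, for each $j$,
\[
 \PP(\text{total of non-maximal excursions}\ge j)\le C j^{-1/2}.
\]
This is enough for the argument.
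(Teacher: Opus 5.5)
Your argument is correct. Its starting point is the paper's: the radial process of the tree bridge is a uniform Dyck path tilted by $v^K$ with $v=D/(D-1)$, which is your $p^{-\ell}$. The final inputs are also the same uniform-Dyck estimates: reflection/ballot for the upper tail, and the confined-path (theta) formula for the small-ball bound.

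Where you differ is in how the tilt is removed. The paper never decomposes into excursions. It bounds $\mathsf D_n(K=k)\le 2k\,2^{-k}$ and notes that $\mathsf E_{\mathsf D_n}v^{3K/2}$ is uniformly bounded because $v^{3/2}<2$. H\"older then gives $\mathsf T_n(E)\le C_D\,\mathsf D_n(E)^{1/3}$ for every event $E$. This transfers both tails at once at the cost of a harmless power, and it is a precise version of the ``avoid the decomposition'' alternative you only sketch. You instead condition on the excursion lengths, so that the excursions are independent and uniform. You then use a union bound over excursions for the upper tail and a long excursion for the lower tail. This is lossless and more structural, but it needs a uniform tail bound for $\ell$ under the tilted law.

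That bound is elementary, and it is exactly what the paper's count yields. Indeed $\PP(\ell=k)\le v^k\mathsf D_n(K=k)\le 2k(v/2)^k$, and $v/2=D/(2(D-1))=1-\alpha$, which is your claimed rate. Alternatively, one summand of a $j$-fold convolution of $f_{\mathbb T}(r)\rho^{-r}$ must have length at least $2n/j$, which gives $\PP(\ell=j)\le Cj^{5/2}(1-\alpha)^{j-1}$.

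Once you have this, the one-big-jump machinery and the $Cj^{-1/2}$ estimate you single out as the main obstacle are unnecessary. By pigeonhole some excursion has length at least $2n/\ell$, and $\ell$ is tight uniformly in $n$.

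For the small-ball step, Brownian-excursion convergence by itself only covers long excursions. To get uniformity you need either the explicit bound $Ca^{-3}e^{-c/a^2}$, which the paper derives from the spectral formula for $(A_h^{2n})_{0,0}$, or the remark that for bounded lengths and small $a$ the event is empty, since the height is at least $1$.

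A minor indexing point: a strictly positive excursion of length $2k$ corresponds to $C_{k-1}$ paths, not $C_k$.
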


\begin{proof}
The radial trajectory of a tree bridge is a Dyck path: a nonnegative
nearest-neighbor path of length $2n$ starting and ending at zero. Let
$\mathsf D_n$ denote the uniform law on these paths, let $K$ be the number
of their positive excursions, and let $M$ be their maximum height.
A radial path with $K=k$ is realized by
$D^k(D-1)^{n-k}$ tree walks. Thus its law under the tree bridge, denoted
by $\mathsf T_n$, satisfies
\begin{equation}\label{eq:dyckdensity}
 \frac{\dd\mathsf T_n}{\dd\mathsf D_n}
 =\frac{v^K}{\mathsf E_{\mathsf D_n}v^K},
 \qquad v=\frac{D}{D-1}<2.
\end{equation}

There are $C_n=(n+1)^{-1}\binom{2n}{n}$ Dyck paths, and the number with
$k$ excursions is
\[
 C_{n,k}=\frac{k}{2n-k}\binom{2n-k}{n}\qquad(1\le k\le n).
\]
Indeed, deleting the first and last step of each excursion gives
$C_{n,k}=[z^{n-k}]C(z)^k$, where $C(z)=1+zC(z)^2$; the displayed
coefficient follows by Lagrange inversion. Consequently,
\[
 \mathsf D_n(K=k)
 =\frac{k(n+1)}{2n-k}
       \prod_{j=0}^{k-1}\frac{n-j}{2n-j}
 \le2k\,2^{-k}.
\]
Since $v\le3/2$ and $v^{3/2}<2$, the moments
$\mathsf E_{\mathsf D_n}v^{3K/2}$ are bounded uniformly in $n$.
The denominator in \eqref{eq:dyckdensity} is at least one. H{\"o}lder's
inequality with exponents $3/2$ and $3$ therefore gives, for every event $E$
of radial paths,
\[
 \mathsf T_n(E)
 =\frac{\mathsf E_{\mathsf D_n}[\one_Ev^K]}
        {\mathsf E_{\mathsf D_n}v^K}
 \le \bigl(\mathsf E_{\mathsf D_n}v^{3K/2}\bigr)^{2/3}
       \mathsf D_n(E)^{1/3},
\]
and hence
\begin{equation}\label{eq:dyckholder}
 \mathsf T_n(E)\le C_D\mathsf D_n(E)^{1/3}.
\end{equation}
It remains to bound the height under the uniform Dyck law.

For the upper tail, take a uniform sequence of $n$ up-steps and $n+1$
down-steps, with partial sums $(S_j)_{j=0}^{2n+1}$. This is a simple
symmetric walk conditioned on $S_{2n+1}=-1$. Rotating the increments
immediately after the first global minimum gives the unique cyclic
rotation whose partial sums are nonnegative until the last step. Removing
that last step produces a uniform Dyck path. Indeed, every such Dyck path
has exactly $2n+1$ preimages; the increment sequence cannot have a proper
period since its total sum is $-1$. The height of the resulting path is
at most $\max_j S_j-\min_j S_j+1$.

For an integer $h\ge1$, reflection at the first visit to $h$ gives
\[
 \PP\left(\max_jS_j\ge h\mid S_{2n+1}=-1\right)
 =\frac{\PP(S_{2n+1}=2h+1)}{\PP(S_{2n+1}=-1)}.
\]
For $0\le h\le n$, the last ratio is
\[
 R_h:=\prod_{j=1}^{h}\frac{n+1-j}{n+1+j}
 \le\exp\!\left(-\frac{h(h+1)}{2n+1}\right),
 \qquad R_0=1;
\]
for $h>n$ it is zero. The bound follows from
$\log(1-y)\le-y$ applied to each factor. Reflection at $-h$ gives
the ratio $R_{h-1}$. Hence both tails are bounded by $Ce^{-ch^2/n}$,
with absolute constants, including $h=1$ after adjusting $C$. It follows that
\begin{equation}\label{eq:uniformdyckupper}
 \mathsf D_n(M\ge h)\le Ce^{-ch^2/n}\qquad(h\ge1).
\end{equation}
Equations~\eqref{eq:dyckholder} and \eqref{eq:uniformdyckupper} prove
\eqref{eq:treeheightupper}.

For the lower tail, let $A_h$ be the adjacency matrix of the path on
$\{0,1,\ldots,h\}$. The number of Dyck paths confined to this interval
is $(A_h^{2n})_{0,0}$. The sine eigenvectors of $A_h$ give
\[
 \frac{(A_h^{2n})_{0,0}}{4^n}
 =\frac{2}{h+2}\sum_{j=1}^{h+1}
   \sin^2\!\frac{\pi j}{h+2}\,
   \cos^{2n}\!\frac{\pi j}{h+2}.
\]
Pair the indices $j$ and $h+2-j$, then use
$\sin x\le x$ and $\cos x\le e^{-x^2/2}$ for $0\le x\le\pi/2$.
Since $C_n\ge c4^n(n+1)^{-3/2}$, this yields, when $h+2\le\sqrt n$,
\begin{equation}\label{eq:uniformdycklower}
 \mathsf D_n(M\le h)
 \le C\left(\frac{\sqrt n}{h+2}\right)^3
          \exp\!\left(-\frac{cn}{(h+2)^2}\right).
\end{equation}
Here we used $\sum_{j\ge1}j^2e^{-xj^2}\le C e^{-cx}$ for $x\ge1$.
Take $h=\lfloor a\sqrt{2n}\rfloor$. If $h=0$, the event is empty. If
$h\ge1$ but $h+2>\sqrt n$, then $a\sqrt{2n}\ge1$ and
$\sqrt n<h+2\le a\sqrt{2n}+2$, which for $a\le1/10$ forces $n$ to lie in
a finite set independent of sufficiently small $a$. For each such fixed $n$,
$\lfloor a\sqrt{2n}\rfloor=0$ once $a$ is small enough, so these cases contribute
nothing to the $\limsup_{a\downarrow0}\sup_n$.

It therefore remains to consider $h+2\le\sqrt n$. Whenever $h\ge1$ we also have
$h+2\le3(h+1)\le C a\sqrt n$ after excluding, as above, the finite range where
$a\sqrt{2n}$ is bounded. Equation~\eqref{eq:uniformdycklower} is then at most
$Ca^{-3}e^{-c/a^2}$, uniformly in the remaining $n$. Combining the finite and
large-$n$ regimes and applying \eqref{eq:dyckholder} proves
\eqref{eq:treeheightlower}.
\end{proof}

The geometric distinction between the two return mechanisms is exact and does
not require a spectral bound.

\begin{lemma}\label{lem:lifting}
Lift the walk from $G_N$ to its universal covering tree $\TT$, starting
at a fixed lift $\widetilde o$ of $o$. Write
\[
 \mathcal C_m=\{\widetilde X_m=\widetilde o\},\qquad
 \widetilde H_m=\max_{0\le j\le m}d_{\TT}(\widetilde o,\widetilde X_j),
 \qquad R_N=\left\lfloor\frac{g_N-2}{2}\right\rfloor.
\]
Then
\begin{equation}\label{eq:closedliftweight}
 \BPr(\mathcal C_m)=\frac{t_m}{u_N(m)}.
\end{equation}
Conditional on $\mathcal C_m$, the lifted path has exactly the law of a
tree bridge of length $m$. Furthermore,
\begin{equation}\label{eq:liftgeometry}
 \mathcal C_m^c\ \Longrightarrow\
       \left\lfloor\frac{g_N}{2}\right\rfloor\le H_m\le m/2,
 \qquad
 \widetilde H_m\le R_N\ \Longrightarrow\ H_m=\widetilde H_m.
\end{equation}
\end{lemma}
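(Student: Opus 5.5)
The argument rests on unique path lifting for the covering map $\pi:\TT\to G_N$. Walks of length $m$ on $G_N$ from $o$ correspond bijectively to walks of length $m$ on $\TT$ from $\widetilde o$, with equal probabilities $D^{-m}$, because both graphs are $D$-regular and $\pi$ is a local isomorphism. Under this bijection, $\{X_m=o\}$ pulls back to $\{\widetilde X_m\in\pi^{-1}(o)\}$, and this event contains $\mathcal C_m$. Hence $\PP_o(\mathcal C_m)=t_m$, and dividing by $\PP_o(X_m=o)=u_N(m)$ gives \eqref{eq:closedliftweight}. For the conditional law, condition the lifted walk on $\mathcal C_m$. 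Since $\mathcal C_m\subset\{X_m=o\}$, the bridge law conditioned on $\mathcal C_m$ equals the unconditioned walk law conditioned on $\mathcal C_m$. Under the bijection, this is the tree walk conditioned on $\widetilde X_m=\widetilde o$, which is exactly the tree bridge.

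For the first implication in \eqref{eq:liftgeometry}, suppose $\mathcal C_m$ fails. Then $\widetilde X_m=\widetilde y\neq\widetilde o$ with $\pi(\widetilde y)=o$. The bound $H_m\le m/2$ holds for any bridge, since $d(o,X_j)\le\min(j,m-j)$. For the lower bound, the tree geodesic from $\widetilde o$ to $\widetilde y$ is nonbacktracking, and its projection is a nonbacktracking closed walk at $o$. Because $\pi$ is a covering of a tree, that projection is not null-homotopic, so it contains a cycle. Consequently $d_{\TT}(\widetilde o,\widetilde y)\ge g_N$. The hard step is converting this into a lower bound on the graph distance $H_m$, rather than on a tree distance. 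Here I would argue by contradiction. Suppose every $X_j$ lies in the graph ball $B=B_{G_N}(o,r)$ with $r=\lfloor g_N/2\rfloor-1$, so that $2r+1<g_N$. Any cycle inside $B$ built from geodesics to $o$ plus one edge has length at most $2r+1<g_N$. Therefore $B$ induces a tree, and the component of $\pi^{-1}(B)$ containing $\widetilde o$ is an isomorphic copy $\widetilde B$ on which $\pi$ is a bijection. The lifted walk never leaves $\widetilde B$, since each step stays above $B$ and lifts edgewise inside the copy. Its endpoint is then the unique preimage of $o$ in $\widetilde B$, which is $\widetilde o$. This contradicts the failure of $\mathcal C_m$, so $H_m\ge r+1=\lfloor g_N/2\rfloor$. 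The point that needs care is that $B$ induces a tree: a cycle in the induced subgraph of the ball must have length at most $2r+1$. This holds by taking the BFS tree of $B$ and noting that any non-tree edge closes a cycle of length at most $2r+1$.

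For the second implication, $\pi$ is a $1$-Lipschitz map, so $d_{G_N}(o,X_j)\le d_{\TT}(\widetilde o,\widetilde X_j)$. For the reverse inequality, let $d_{G_N}(o,X_j)=k<d_{\TT}(\widetilde o,\widetilde X_j)=\ell\le R_N$. The graph geodesic from $o$ to $X_j$ lifts to a tree path from $\widetilde o$ to a different preimage of $X_j$. Concatenating this with the tree geodesic gives a nonbacktracking closed walk in the graph, after reduction, of length at most $k+\ell<2R_N+2\le g_N$ and nontrivial. It therefore contains a cycle shorter than $g_N$, which is a contradiction. Hence distances agree pointwise for every $j$ with $\widetilde X_j$ in the tree ball of radius $R_N$, and taking maxima gives $H_m=\widetilde H_m$.
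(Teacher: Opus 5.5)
Your proof is correct and follows the paper's route: the path-lifting bijection gives the closed-lift weight and the conditional tree-bridge law, the BFS argument shows that $B_{G_N}(o,R_N)$ induces a tree so that closed walks inside it have closed lifts, and the girth bound rules out distance shortcuts within radius $R_N$. The only differences are cosmetic: you prove the last step by an explicit short-cycle contradiction instead of citing the isomorphism of rooted balls of radius $R_N$, and your preliminary bound $d_{\TT}(\widetilde o,\widetilde y)\ge g_N$ is never used.
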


\begin{proof}
Path lifting sends simple random walk on $G_N$ to simple random walk on
$\TT$. A lifted return to $\widetilde o$ implies a return to $o$.
Thus the numerator of \eqref{eq:closedliftweight} is $t_m$ before
conditioning, and the denominator is $u_N(m)$. The same bijection of
paths proves the conditional-law assertion.

Since $2R_N+1<g_N$, the ball $B_{G_N}(o,R_N)$ is a tree. Indeed, a
non-tree edge in a breadth-first search tree of this ball would produce
a cycle of length at most $2R_N+1$. Every closed walk contained in this
ball therefore has a closed lift. Hence $\mathcal C_m^c$ implies
$H_m\ge R_N+1=\lfloor g_N/2\rfloor$. For every bridge,
$d(o,X_j)\le\min(j,m-j)$, proving $H_m\le m/2$.
The covering map restricts to an isomorphism between the rooted balls of
radius $R_N$ in $\TT$ and $G_N$. It preserves distances from the root
there, proving the second implication in \eqref{eq:liftgeometry}.
\end{proof}

\begin{proof}[Proof of Theorem~\ref{thm:height}]
The return part of Theorem~\ref{thm:main}, already proved above, gives
\begin{equation}\label{eq:heightmixweight}
 p_N:=\BPr(\mathcal C_m)=\frac{t_m}{u_N(m)}
      =\frac{t_m}{s_N+t_m}+o(1)=\theta_N+o(1).
\end{equation}
Since $m\asymp\log N$ and $R_N\asymp\log N$, the two preceding lemmas
give
\begin{equation}\label{eq:heightcoupling}
 \BPr(H_m\ne\widetilde H_m\mid\mathcal C_m)
 \le\PP\bigl(H^{\mathbb T}_m>R_N\mid X_m=o\bigr)
 \le Ce^{-cR_N^2/m}=o(1).
\end{equation}
Here the upper bound $R_N=O(\log N)$ follows, for example, from the
injective tree ball of radius $R_N$, which contains at least
$(D-1)^{R_N}$ vertices. Thus, conditional on $\mathcal C_m$,
$H_m=\Theta_{\PP}(\sqrt m)$.

For $\sqrt{\log N}\ll r_N\ll\log N$, a nonclosed lift has
$H_m\ge\lfloor g_N/2\rfloor>r_N$ for all large $N$. Conditional on a
closed lift, projection cannot increase distance, and
\eqref{eq:treeheightupper} gives $\BPr(H_m\le r_N\mid\mathcal C_m)=1-o(1)$.
Together with \eqref{eq:heightmixweight}, this proves
\eqref{eq:heightweight}.

Uniformly for $m\asymp\log N$, the definition of $m_N^*$ and the tree
return asymptotic give
\begin{equation}\label{eq:heightcenterratio}
 \frac{t_m}{s_N}
 =(1+o(1))\left(\frac{\log N}{\gamma m}\right)^{3/2}
                 e^{-\gamma(m-m_N^*)}.
\end{equation}
The prefactor is bounded above and away from zero. If $m-m_N^*\to-\infty$,
then $\theta_N\to1$, and the conditional square-root estimate transfers
to the full bridge law. If $m-m_N^*\to+\infty$, then $\theta_N\to0$,
and the first implication in \eqref{eq:liftgeometry} holds with
probability tending to one. Finally, when $m-m_N^*\to x\in\mathbb R$,
the prefactor in \eqref{eq:heightcenterratio} tends to one, giving the
stated critical probability.
\end{proof}

\section{Intermediate returns}

For the rest of the proof of Theorems~\ref{thm:main} and
\ref{thm:critical}, assume logarithmic girth and $m\le B\log N$.
All constants in this section may depend on $D$ and $B$.
Put $h_r=\rho^r/(r+1)^{3/2}$. Lemma~\ref{lem:approx}, including at odd
times, implies
\begin{equation}\label{eq:globalupper}
 u_N(r)\le s_N+C h_r\qquad(0\le r\le B\log N).
\end{equation}
At even times, positivity of the nontrivial spectral contribution gives
$u_N(m)\ge s_N$, while projection of tree walks gives $u_N(m)\ge t_m$.
Consequently,
\begin{equation}\label{eq:denominator}
 u_N(m)\ge c_0(s_N+h_m).
\end{equation}

The following estimate rules out substantial returns at intermediate times;
it is used both to compare local visit indicators with the full range and to
pass from returns to first returns.

\begin{lemma}\label{lem:middle}
For every even $m\le B\log N$ and integer $1\le L<m/2$,
\begin{align}
 \sum_{r=L}^{m-L}u_N(r)u_N(m-r)
 &\le C\bigl(ms_N^2+s_N\rho^L+t_mL^{-1/2}\bigr),
 \label{eq:middleraw}\\
 \sum_{r=L}^{m-L}\frac{u_N(r)u_N(m-r)}{u_N(m)}
 &\le C\left(L^{-1/2}+\frac mN\right).
 \label{eq:middleconditional}
\end{align}
\end{lemma}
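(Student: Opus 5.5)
Expand each factor using \eqref{eq:globalupper}: for $L\le r\le m-L$,
\[
 u_N(r)u_N(m-r)\le (s_N+Ch_r)(s_N+Ch_{m-r})
 = s_N^2 + Cs_N(h_r+h_{m-r}) + C^2h_rh_{m-r}.
\]
We sum the three groups of terms over $L\le r\le m-L$ separately.

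The first group contributes at most $ms_N^2$.

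For the second group, $h_r\le\rho^r$ is a geometric sequence. Hence $\sum_{r\ge L}h_r\le\rho^L/(1-\rho)$, and by symmetry the same bound holds for $\sum_{r\le m-L}h_{m-r}$. This gives the term $Cs_N\rho^L$.

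For the third group, $h_rh_{m-r}=\rho^m(r+1)^{-3/2}(m-r+1)^{-3/2}$. Split the range at $m/2$. On the half $r\le m/2$ we have $(m-r+1)^{-3/2}\le C m^{-3/2}$, so
\[
 \sum_{r\ge L}h_rh_{m-r}\le C\rho^m m^{-3/2}\sum_{r\ge L}(r+1)^{-3/2}\le C\rho^m m^{-3/2}L^{-1/2}.
\]
The other half is symmetric. By the lower bound in \eqref{eq:treebounds}, $\rho^m m^{-3/2}\le Ct_m$, which gives the term $Ct_mL^{-1/2}$. This proves \eqref{eq:middleraw}.

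For \eqref{eq:middleconditional}, divide by $u_N(m)\ge c_0(s_N+h_m)$ from \eqref{eq:denominator}, using also $t_m\asymp h_m$. Treat the three terms in turn:
\begin{enumerate}
\item $ms_N^2/(s_N+h_m)\le ms_N\le 2m/N$.
\item $t_mL^{-1/2}/(s_N+h_m)\le CL^{-1/2}$.
\item $s_N\rho^L/(s_N+h_m)\le\rho^L\le CL^{-1/2}$.
\end{enumerate}
This gives the second estimate.

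The only point requiring care is the convolution of $(r+1)^{-3/2}$ with $(m-r+1)^{-3/2}$, which must produce the decay $L^{-1/2}$ without losing a power of $m$. The split at $m/2$ handles this. The other point to check is that \eqref{eq:globalupper} is valid at odd $r$. In the bipartite case $u_N(r)=0$ there; in the nonbipartite case Lemma~\ref{lem:approx} gives it directly, since $t_r=0$ at odd $r$ and the error term $\varepsilon_N\rho^r$ is at most $Ch_r$ because $\varepsilon_N(\log N)^{3/2}\to0$ and $r\le B\log N$.
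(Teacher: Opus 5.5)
Your proof is correct and is essentially the paper's argument. You expand the product from \eqref{eq:globalupper}, bound the cross terms by the geometric tail $\sum_{r\ge L}h_r\le C\rho^L$, and control $\sum_{r=L}^{m-L}h_rh_{m-r}$ by splitting at $m/2$ to get $Ch_mL^{-1/2}$, which \eqref{eq:treebounds} converts to $Ct_mL^{-1/2}$; dividing by \eqref{eq:denominator} then gives the conditional bound, and your check that \eqref{eq:globalupper} holds at odd times fills in a step the paper only asserts.
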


\begin{proof}
For $r\le m/2$,
\[
 \frac{h_rh_{m-r}}{h_m}
 =\frac{(m+1)^{3/2}}{(r+1)^{3/2}(m-r+1)^{3/2}}
 \le\frac{2^{3/2}}{(r+1)^{3/2}}.
\]
Splitting at $m/2$ gives
$\sum_{r=L}^{m-L}h_rh_{m-r}\le C h_mL^{-1/2}$.
Also $\sum_{r\ge L}h_r\le C\rho^L$.
Expanding the product in \eqref{eq:globalupper} and using
\eqref{eq:treebounds} proves \eqref{eq:middleraw}.
Divide by \eqref{eq:denominator}; the three terms are bounded by
$Cm/N$, $C\rho^L$, and $CL^{-1/2}$, respectively. This proves
\eqref{eq:middleconditional}.
\end{proof}

Vertex transitivity and the Markov property give
\begin{equation}\label{eq:pair}
 \BPr(X_i=X_j)=\frac{u_N(r)u_N(m-r)}{u_N(m)},
 \qquad r=j-i,\quad 0\le i<j<m.
\end{equation}
Indeed, summing over the common vertex gives
\[
 \sum_xP_N^i(o,x)P_N^r(x,x)P_N^{m-j}(x,o)
 =u_N(r)u_N(m-r)
\]
before division by $u_N(m)$. In particular,
\eqref{eq:middleconditional} bounds the conditional expected number of
visits to $o$ at times between $L$ and $m-L$. It also bounds the probability
of a first return in that interval.

\section{The range law}

We prove Theorem~\ref{thm:main}(2). The observables used below depend only
on equalities among visited vertices. By vertex transitivity, their bridge
law is unchanged if the root is first chosen uniformly. Under this law,
each closed rooted walk of length $m$ has probability
$D^{-m}/(Nu_N(m))$. The law is therefore invariant under cyclic shifts.
We may regard all time indices as elements of $\mathbb Z/m\mathbb Z$;
\eqref{eq:pair} then holds for cyclic separations as well.

For $1\le L<m/2$, define
\[
 Q_i=\one_{\{X_i\notin\{X_{i+1},\ldots,X_{i+L}\}\}},\qquad
 Y_{m,L}=\frac1m\sum_{i=0}^{m-1}Q_i.
\]

Each $Q_i$ marks a visit not followed by another visit to the same vertex
within $L$ steps. A vertex usually contributes one such mark. The following
lemma bounds the error by equal-vertex pairs separated by a long time.

\begin{lemma}\label{lem:rangeapprox}
Under the hypotheses of Theorem~\ref{thm:main},
\begin{equation}\label{eq:rangeapprox}
 \BEx|Y_{m,L}-W_m|
 \le \frac Lm+C L^{-1/2}+C\frac mN.
\end{equation}
\end{lemma}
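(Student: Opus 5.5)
We compare $mW_m=V_m$ with $mY_{m,L}=\sum_iQ_i$ deterministically, path by path, and then take expectations using \eqref{eq:pair} and Lemma~\ref{lem:middle}. Take the indices of $Q_i$ cyclically, as set up above, but first consider the linear count. For each visited vertex $x$ with visit times $i_1<\dots<i_k$ in $\{0,\dots,m-1\}$, the last visit $i_k$ has no later visit within $L$ steps, except possibly a wrap-around visit. Each earlier visit $i_\ell$ with $i_{\ell+1}-i_\ell>L$ also satisfies $Q_{i_\ell}=1$. The wrap-around matters only for times $i$ with $i+L\ge m$, that is, for at most $L$ indices.

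This gives the bound
\[
 |mY_{m,L}-V_m|\le L+\#\{0\le i<m:\ \exists j,\ L<j-i<m-L\ (\text{cyclically}),\ X_j=X_i,\ X_{i+1},\dots\ne\}.
\]
To justify it, decompose $\sum_iQ_i$ as follows:
\begin{enumerate}
\item Indices $i\le m-1-L$ that are the last visit of their vertex give exactly $Q_i=1$. These account for $V_m$ up to at most $L$ vertices whose last visit lies in the final $L$ times.
\item Indices that are not last visits but still have $Q_i=1$ are excess. For such $i$ the next visit $j$ to $X_i$ satisfies $j-i>L$. Also $j-i<m-L$ unless $i$ lies near the end, and $i\ge0$, $j\le m-1$ then forces $j-i\le m-1$.
\end{enumerate}

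Note that the overshoot is tamer in the cyclic picture. There $X_i=X_{i+m}$, so any excess index has a cyclic gap in $(L,m-L)$ to the next visit in one of the two directions. More simply, each excess index $i$ yields a pair $(i,j)$ with $X_i=X_j$ and cyclic separation $r=j-i$, taken mod $m$, in $[L+1,m-L-1]$. Hence
\[
 |mY_{m,L}-V_m|\le L+\sum_{i=0}^{m-1}\sum_{r=L+1}^{m-L-1}\one_{\{X_i=X_{i+r}\}}.
\]
Here I would be careful about the boundary. A last visit at $i>m-1-L$ may have $Q_i=0$ cyclically because of an early visit, and this is absorbed in the $L$ term. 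Conversely, an index $i$ with $Q_i=1$ that is not the last visit has its next linear visit at distance $>L$. If that distance is $\ge m-L$, then $i<L$, and such indices number at most $L$ as well. So the correct bound is $2L$ plus the double sum, which changes only the constant multiplying $L/m$. This is harmless for the use of the lemma, but to obtain exactly $L/m$ I would argue the bijection more carefully: map each vertex to its cyclically last marked visit.

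Taking bridge expectations and using \eqref{eq:pair} for cyclic separations together with cyclic shift invariance, the expected double sum equals
\[
 m\sum_{r=L+1}^{m-L-1}\frac{u_N(r)u_N(m-r)}{u_N(m)}\le Cm\Bigl(L^{-1/2}+\frac mN\Bigr)
\]
by \eqref{eq:middleconditional}. Dividing by $m$ yields \eqref{eq:rangeapprox}. The main obstacle is the combinatorial bookkeeping at the wrap-around, namely showing that each vertex contributes exactly one mark except for $O(L)$ boundary indices and long-gap pairs. The probabilistic estimate is immediate from Lemma~\ref{lem:middle}.
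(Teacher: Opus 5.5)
\textbf{Verdict: minor but real gap.} Your overall strategy matches the paper's. You prove a deterministic bound $|\sum_iQ_i-V_m|\le(\text{boundary term})+M_{m,L}$, with $M_{m,L}=\sum_{i}\sum_{r=L+1}^{m-L-1}\one_{\{X_i=X_{i+r}\}}$, and then apply \eqref{eq:pair} and \eqref{eq:middleconditional}; that expectation step is correct. The problem is the deterministic step. What you actually prove is $|\sum_iQ_i-V_m|\le 2L+M_{m,L}$, which gives the lemma with $2L/m$ in place of $L/m$, so the inequality as stated is not established.

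The bound with a single $L$ that you display after ``Hence'' rests on the claim that every excess index has cyclic separation in $[L+1,m-L-1]$ to a later visit of the same vertex. With your linear notion of excess this is false. Take a vertex visited only at times $1$ and $m-1$, with $L\ge2$. Then $Q_1=1$, index $1$ is not a last visit, and its only partner is at separation $m-2>m-L-1$. Your suggested repair via cyclically last marked visits is not carried out. It would also need a separate count of vertices that have no marked visit at all.

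\textbf{The fix is already inside your own decomposition: the two boundary errors have opposite signs.} Let $a$ be the number of last visits $i>m-1-L$ with $Q_i=0$, and let $b$ be the number of non-last visits with $Q_i=1$. Then $\sum_iQ_i-V_m=b-a$, with $0\le a\le L$ and $0\le b\le L+M_{m,L}$. Hence $|\sum_iQ_i-V_m|\le\max(a,b)\le L+M_{m,L}$. You added the two $L$'s instead of taking the maximum.

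\textbf{How the paper avoids the issue.} It never uses a linear boundary. For each vertex $v$ it counts the number $A_v$ of cyclic gaps between consecutive visits that exceed $L$; this is exactly $v$'s contribution to $\sum_iQ_i$. If $A_v\ge2$, every large gap is below $m-L$ because the gaps sum to $m$, so $A_v-1$ is dominated by pairs counted in $M_{m,L}$. If $A_v=0$, then $v$ has at least $m/L$ visits, so at most $L$ vertices are of this type.

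As you note, the factor $2$ does not matter for the later use, where $L$ is fixed and $m\to\infty$. It is nonetheless a discrepancy with the bound as stated.
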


\begin{proof}
For a visited vertex $v$, list the cyclic gaps between consecutive visits
to $v$. These positive gaps sum to $m$. The contribution of $v$ to
$\sum_iQ_i$ is the number $A_v$ of gaps larger than $L$.

If $A_v=1$, its contribution agrees with its contribution to $V_m$.
If $A_v\ge2$, each of its large gaps is strictly smaller than $m-L$, since
another gap is larger than $L$. Thus $A_v-1$ is bounded by the number of
directed equal-vertex pairs at cyclic separations strictly between $L$
and $m-L$. If $A_v=0$, there are at least $m/L$ visits to $v$. Since
there are $m$ visits in total, at most $L$ vertices have $A_v=0$.
Consequently, deterministically,
\[
 \left|\sum_iQ_i-V_m\right|\le L+M_{m,L},\qquad
 M_{m,L}=\sum_{i=0}^{m-1}\sum_{r=L+1}^{m-L-1}
             \one_{\{X_i=X_{i+r}\}}.
\]
Take expectations, apply \eqref{eq:pair} and
\eqref{eq:middleconditional}, and divide by $m$.
\end{proof}

The exact loop-deletion identity below is the combinatorial core of the moment
calculation.

\begin{lemma}[Loop deletion]\label{lem:looperasure}
Fix $L$ and $k$, and choose indices $i_1,\ldots,i_k$ such that the cyclic sets
$\{i_j,i_j+1,\ldots,i_j+L\}$ are pairwise disjoint. Assume $L<g_N$ and
$m>k(L+1)$. Then
\begin{equation}\label{eq:looperasure}
 \BEx\prod_{j=1}^kQ_{i_j}
 =\sum_{A\subseteq[k]}(-1)^{|A|}
   \sum_{\substack{1\le r_j\le L\\j\in A}}
    \left(\prod_{j\in A}f_{\mathbb T}(r_j)\right)
    \frac{u_N(m-\sum_{j\in A}r_j)}{u_N(m)}.
\end{equation}
where the term for $A=\varnothing$ is $1$.
\end{lemma}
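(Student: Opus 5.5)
Write each $Q_{i_j}=1-(1-Q_{i_j})$ and expand the product by inclusion--exclusion:
\[
 \prod_{j=1}^kQ_{i_j}=\sum_{A\subseteq[k]}(-1)^{|A|}\prod_{j\in A}\one_{E_j},\qquad
 E_j=\{X_{i_j}\in\{X_{i_j+1},\ldots,X_{i_j+L}\}\}.
\]
It then suffices to show, for each fixed $A$,
\begin{equation}\label{eq:planA}
 \BPr\Bigl(\bigcap_{j\in A}E_j\Bigr)
 =\sum_{\substack{1\le r_j\le L\\ j\in A}}
   \prod_{j\in A}f_{\mathbb T}(r_j)\,
   \frac{u_N(m-\sum_{j\in A}r_j)}{u_N(m)} .
\end{equation}
To prove this, I decompose $E_j$ according to the first return time. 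Let $r_j\in\{1,\ldots,L\}$ be the smallest $r$ with $X_{i_j+r}=X_{i_j}$. These events are disjoint in $r_j$, and their union over $1\le r_j\le L$ is $E_j$. The windows $\{i_j,\ldots,i_j+L\}$ are pairwise disjoint, and $m>k(L+1)$. Hence the segments $[i_j,i_j+r_j]$ for $j\in A$ are disjoint cyclic subintervals of $\mathbb Z/m\mathbb Z$, and they leave a nonempty complement.

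Next I count closed walks. I use the cyclically invariant description already set up: under the uniformly rooted bridge law, each closed rooted walk of length $m$ has weight $D^{-m}/(Nu_N(m))$. Consider a closed walk of length $m$ in which, for each $j\in A$, the segment from time $i_j$ to $i_j+r_j$ is a loop based at $X_{i_j}$ that first returns at time $r_j$. Cutting out these loops is a bijection onto pairs consisting of:
\begin{enumerate}
 \item a closed walk of length $m-\sum_{j\in A}r_j$, with the insertion positions determined by the fixed indices; and
 \item for each $j\in A$, a first-return loop of length $r_j$ at the vertex sitting at the corresponding insertion time.
\end{enumerate}
The resulting pair gives a well-defined count because, by vertex transitivity, the number of first-return loops of length $r$ at any vertex equals $D^rf_N(r)$. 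Moreover, $r\le L<g_N$, so $f_N(r)=f_{\mathbb T}(r)$, which was noted after \eqref{eq:momentmatch}. The reduced closed walks, again weighted uniformly over roots, have total weight $N D^{m-\sum r_j}u_N(m-\sum r_j)$, by $u_N=\frac1N\operatorname{tr}P_N^{\,\cdot}$. Multiplying the counts and dividing by $N D^m u_N(m)$ gives exactly the summand in \eqref{eq:planA}. Cyclic invariance lets me work on $\mathbb Z/m\mathbb Z$ and treat an index $i_j$ near $m$ the same way as any other.

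The main thing to verify carefully is that loop deletion really is a bijection. Once the remaining walk and the lengths $r_j$ are fixed, the positions in the reduced walk where the loops are reinserted must be determined. They are, because the indices $i_j$ are fixed and deleting earlier loops shifts later indices by known amounts; cyclically, I choose a reference point outside all segments so the shifts are unambiguous. The first-return requirement is also what makes the events for different $r_j$ disjoint. Nothing prevents other coincidences in the walk, and none are excluded, so no further restriction is needed. The root itself may lie inside a deleted segment; I handle this by the cyclic-shift invariance, which lets me rotate so that time $0$ lies outside all segments before cutting. Summing \eqref{eq:planA} over $A$ with the signs $(-1)^{|A|}$ yields \eqref{eq:looperasure}, where the term for $A=\varnothing$ equals $1$.
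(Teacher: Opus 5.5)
Your proposal is correct and follows essentially the same route as the paper. You expand by inclusion--exclusion over the $Q_{i_j}$, classify by first-return length, and pass to the uniformly rooted, cyclically invariant closed-walk law. You then re-root at a time outside the marked segments and use a weight-preserving loop-deletion bijection, in which each deleted loop contributes $f_{\mathbb T}(r_j)$ (since $r_j\le L<g_N$) and the reduced closed walk contributes $u_N(m-\sum_{j\in A}r_j)$. The one minor difference is the re-rooting point: the paper takes the smallest time outside the full windows $\{i_j,\dots,i_j+L\}$, while you take a point outside the segments $[i_j,i_j+r_j]$; since the $r_j$ are fixed within each term, the inverse map recovers either choice.
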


\begin{proof}
Expand each $Q_{i_j}$ by inclusion--exclusion according to whether the walk
returns to $X_{i_j}$ during the following $L$ steps, and classify such a return
by its first return length $r_j$. For a fixed subset $A$ and fixed lengths
$(r_j)_{j\in A}$, the selected time windows are disjoint, so the corresponding
first-return segments are disjoint closed subwalks.

It is convenient to remove any ambiguity about cyclic re-rooting. By vertex
transitivity, the bridge expectation based at $o$ is the same as the expectation
obtained by first choosing the root uniformly from $V(G_N)$ and then choosing a
closed walk of length $m$ from that root. Thus we may work with vertex-rooted
closed walks with arbitrary root; cyclic re-rooting is then an exact
weight-preserving operation on the underlying marked closed walk.

Let $c$ be the smallest time in $\{0,\ldots,m-1\}$ (in the fixed integer
labelling of the cycle) that lies outside the union of the selected windows.
Such a time exists because their union has size at most $k(L+1)<m$. Cyclically
re-root the marked closed walk at time $c$ and rotate the marked window indices
by the same amount. This is a deterministic choice depending only on the marked
indices, so the same $c$ is recovered in the inverse construction. Order the
selected windows cyclically from this root and delete their first-return
segments in that order. After earlier deletions, the insertion point of the
$j$th segment is its rotated index minus the total length of the previously
deleted segments. Thus deletion produces a closed walk of length
$m-\sum_{j\in A}r_j$ together with an ordered collection of rooted
first-return loops and the deterministic root mark. Conversely, insert the
loops in reverse order at the adjusted positions and undo the cyclic rotation.
These operations are mutually inverse and preserve simple-random-walk weight;
in particular, no cyclic multiplicity is introduced.

Because $r_j\le L<g_N$, every such first-return loop lifts uniquely to a
first-return loop on the regular tree. At any base vertex its total simple-walk
weight is therefore $f_{\mathbb T}(r_j)$. The remaining closed walk has total
weight $u_N(m-\sum r_j)$. Dividing by the total bridge weight $u_N(m)$ and
summing over the prescribed lengths proves \eqref{eq:looperasure}.
\end{proof}

For every fixed even $r$, Theorem~\ref{thm:main}(1) and
\eqref{eq:treeasymptotic} give, along any sequence $m\to\infty$ under
consideration,
\begin{equation}\label{eq:ratios}
 \frac{u_N(m-r)}{u_N(m)}
 =(1-\theta_N)+\theta_N\rho^{-r}+o(1).
\end{equation}
Only even lengths contribute to \eqref{eq:looperasure}. Set
\begin{equation}\label{eq:truncatedescapes}
 \alpha_L=1-\sum_{r\le L}f_{\mathbb T}(r)\rho^{-r},\qquad
 \beta_L=1-\sum_{r\le L}f_{\mathbb T}(r).
\end{equation}
Substitution of \eqref{eq:ratios} into \eqref{eq:looperasure} yields
\[
 \BEx\prod_{j=1}^kQ_{i_j}
 =\theta_N\alpha_L^k+(1-\theta_N)\beta_L^k+o(1).
\]
The expression is independent of the positions of the disjoint windows.
Among all ordered $k$-tuples, only $O_k(Lm^{k-1})$ have overlapping
windows. Their contribution to the normalized moment is $o(1)$, so
\begin{equation}\label{eq:Ymoments}
 \BEx Y_{m,L}^k
 =\theta_N\alpha_L^k+(1-\theta_N)\beta_L^k+o(1).
\end{equation}
All variables lie in $[0,1]$, and $|x^k-y^k|\le k|x-y|$ there. Thus
\eqref{eq:Ymoments} and Lemma~\ref{lem:rangeapprox} imply, for each fixed $L$,
\begin{align*}
 \limsup_{N\to\infty}
 \left|\BEx W_m^k-\theta_N\alpha^k-(1-\theta_N)\beta^k\right|
 \le{}& CkL^{-1/2}+|\alpha_L^k-\alpha^k|\\
      &+|\beta_L^k-\beta^k|.
\end{align*}
Equation~\eqref{eq:escapes} gives $\alpha_L\to\alpha$ and
$\beta_L\to\beta$. Sending $L\to\infty$ proves \eqref{eq:moments}
without assuming that $\theta_N$ converges. If $\theta_N\to\theta$,
the limiting moments are those of
$\theta\delta_\alpha+(1-\theta)\delta_\beta$. Polynomial approximation
of continuous functions on $[0,1]$ proves \eqref{eq:mixture}.

\section{First returns and the critical center}

For the first-return assertion of Theorem~\ref{thm:critical},
fix $w_N=o(\log N)$ and consider even $|m-m_N^*|\le w_N$.
Choose $B>\gamma^{-1}$ so that $m\le B\log N$ for all large $N$.
Choose a fixed $\eta>0$ such that $\eta<c$ and
$2\eta<\gamma^{-1}$, and put $L=\lfloor\eta\log N\rfloor$.
Since $g_N\ge c\log N$ and $m\sim\gamma^{-1}\log N$ uniformly in the
window, these choices ensure $L<g_N$ and $2L<m$ for all sufficiently large
$N$.  Moreover, by \eqref{eq:criticalratio} below,
\[
 \frac{s_N\rho^L}{t_m}=N^{o(1)}\rho^L
 =N^{-\gamma\eta+o(1)}=o(1),
\]
Thus no dependence of $\eta$ on the $o(1)$ term is required: any fixed $\eta>0$ satisfying the preceding geometric constraints gives this decay uniformly throughout the window.

Let $A_N(m,L)$ be the unconditioned probability of $X_m=o$ with no
return at times $1,\ldots,L$ or $m-L,\ldots,m-1$. A return in the first
interval is classified by its first occurrence; a return in the last
interval is classified by the first occurrence in the reversed walk.
Reversibility and $2L<m$ give
\begin{align}
 A_N(m,L)={}&u_N(m)
 -2\sum_{r\le L}f_{\mathbb T}(r)u_N(m-r)\notag\\
 &+\sum_{r,s\le L}f_{\mathbb T}(r)f_{\mathbb T}(s)
                   u_N(m-r-s).
 \label{eq:endavoid}
\end{align}
Here $f_N(r)=f_{\mathbb T}(r)$ for $r\le L<g_N$.
Let $A_{\mathbb T}(m,L)$ be the same quantity on the tree.
Substituting \eqref{eq:uniformerror} into \eqref{eq:endavoid} gives
\begin{equation}\label{eq:endcompare}
 A_N(m,L)=A_{\mathbb T}(m,L)+s_N\beta_L^2
                         +O(\varepsilon_N\rho^m).
\end{equation}
Indeed, all contributing shifts are even, so every stationary term is
$s_N$. The absolute sum of the error multipliers is at most
$(1+F(\rho^{-1}))^2$, which is bounded.

A path counted by $A_N(m,L)$ but not by $f_N(m)$ must return at some
time strictly between $L$ and $m-L$. The union bound and the Markov
property imply
\[
 0\le A_N(m,L)-f_N(m)
 \le\sum_{r=L+1}^{m-L-1}u_N(r)u_N(m-r).
\]
Likewise, on the tree,
\[
 0\le A_{\mathbb T}(m,L)-f_{\mathbb T}(m)
 \le Ct_mL^{-1/2}.
\]
Thus the intermediate-return estimates control the errors on \emph{both}
sides of the comparison.  Indeed, writing
$E_N=A_N(m,L)-f_N(m)$ and
$E_{\mathbb T}=A_{\mathbb T}(m,L)-f_{\mathbb T}(m)$, equation
\eqref{eq:endcompare} gives
\[
 f_N(m)-f_{\mathbb T}(m)-s_N\beta_L^2
 = O(\varepsilon_N\rho^m)-E_N+E_{\mathbb T}.
\]
Using \eqref{eq:middleraw} to bound $E_N$ therefore yields the required
two-sided estimate
\begin{equation}\label{eq:firsterror}
 \left|f_N(m)-f_{\mathbb T}(m)-s_N\beta_L^2\right|
 \le C\left(t_mL^{-1/2}+s_N\rho^L+ms_N^2+
                         \varepsilon_N\rho^m\right).
\end{equation}
Also $\beta_L-\beta=O(\rho^L)$ by Lemma~\ref{lem:tree}.

From \eqref{eq:center} and \eqref{eq:treeasymptotic}, uniformly in this
window,
\begin{equation}\label{eq:criticalratio}
 \frac{t_m}{s_N}=(1+o(1))e^{-\gamma(m-m_N^*)},\qquad
 \frac{s_N}{t_m}=N^{o(1)}.
\end{equation}
Since $L\asymp\log N$, every error in \eqref{eq:firsterror}, as well as
$s_N|\beta_L^2-\beta^2|$, is $o(t_m)$ uniformly: the respective bounds
after division by $t_m$ are
\[
 O(L^{-1/2}),\quad N^{o(1)}\rho^L,\quad
 O\bigl(N^{o(1)}\log N/N\bigr),\quad
 O\bigl(\varepsilon_N(\log N)^{3/2}\bigr).
\]
This proves \eqref{eq:firstcritical}.

Indeed, \eqref{eq:criticalratio} follows by writing
$m=m_N^*+x_N$, $x_N=o(\log N)$. Then $m\sim\gamma^{-1}\log N$, and
\[
 \frac{b_Dm^{-3/2}e^{-\gamma m}}{a_N/N}
 =\left(\frac{\log N}{\gamma m}\right)^{3/2}e^{-\gamma x_N}
 =(1+o(1))e^{-\gamma x_N}.
\]
If $x_N\to x$, we obtain
$\theta_N\to(1+e^{\gamma x})^{-1}$. If $x_N\to-\infty$ or $+\infty$,
then $\theta_N\to1$ or $0$. The range assertions of
Theorem~\ref{thm:critical} now follow from Theorem~\ref{thm:main}.
This identifies the constant in the centering and proves a bounded
transition window, with the two-point law at every fixed offset.

\section{Supercritical concentration with only diverging girth}

We prove Theorem~\ref{thm:supercritical}. Here logarithmic girth is not
assumed. At even times the spectral decomposition gives
\[
 u_N(m)=s_N+q_N(m),\qquad 0\le q_N(m)\le\rho^m.
\]
Since $m/\log N\to C>\gamma^{-1}$, we have
$mN\rho^m\to0$. In particular, for every fixed even $r$,
\begin{equation}\label{eq:superratios}
 u_N(m)\sim s_N,\qquad \frac{u_N(m-r)}{u_N(m)}\longrightarrow1.
\end{equation}
At all times, $u_N(r)\le s_N+\rho^r$. Expanding this bound and using
$u_N(m)\ge s_N$ gives, for fixed $1\le L<m/2$,
\begin{equation}\label{eq:supermiddle}
 \sum_{r=L}^{m-L}\frac{u_N(r)u_N(m-r)}{u_N(m)}
 \le C\left(\rho^L+\frac mN+mN\rho^m\right).
\end{equation}
The deterministic cyclic-gap argument in Lemma~\ref{lem:rangeapprox}
therefore yields
\[
 \BEx|Y_{m,L}-W_m|
 \le\frac Lm+C\left(\rho^L+\frac mN+mN\rho^m\right).
\]
Fix $L$. Since $g_N\to\infty$, we have $L<g_N$ for all sufficiently
large $N$, so the exact loop-deletion identity \eqref{eq:looperasure} applies.
We first send $N\to\infty$ with this $L$ fixed, and only afterwards send
$L\to\infty$. Using \eqref{eq:superratios} in place of \eqref{eq:ratios}
gives, for $k=1,2$,
\[
 \BEx Y_{m,L}^k\longrightarrow\beta_L^k.
\]
Since all variables are bounded by one and $\beta_L\to\beta$, this
order of limits gives
$\BEx W_m\to\beta$ and $\BEx W_m^2\to\beta^2$.
Hence $\BEx(W_m-\beta)^2\to0$, which proves the claimed convergence in
probability.

\section*{Acknowledgment}
The author acknowledges ChatGPT for writing this note.

\end{document}